\newtheorem{thm}{Theorem}[section]
\newtheorem{cor}[thm]{Corollary}
\newtheorem{lem}[thm]{Lemma}
\newtheorem{rmk}[thm]{Remark}
\newcommand{\1}{\partial}
\newcommand{\2}{\overline}
\newcommand{\3}{\varepsilon}
\newcommand{\R}{{\mathbb R}}
\newcommand{\Z}{{\mathbb Z}}
\newcommand{\cD}{{\mathcal D}}
\newcommand{\cX}{{\mathcal X}}
\newcommand{\ve}{\varepsilon}
\begin{document}
\title{Existence of self-similar solution of the inverse mean curvature flow}
\author{Kin Ming Hui\\
Institute of Mathematics, Academia Sinica\\
Taipei, Taiwan, R. O. C.}
\date{Aug 23, 2018}
\smallbreak \maketitle
\begin{abstract}
We will give a new proof of a recent result of P.~Daskalopoulos, G.~Huisken and J.R.~King (\cite{DH} and reference [7] of \cite{DH}) on the existence of self-similar solution of the inverse mean curvature flow which is the graph of a radially symmetric solution in $\R^n$, $n\ge 2$, of the form
$u(x,t)=e^{\lambda t}f(e^{-\lambda t} x)$ for any constants $\lambda>\frac{1}{n-1}$ and $\mu<0$ such that $f(0)=\mu$. More precisely we will give a new proof of the existence of a unique radially symmetric solution $f$ of the equation $\mbox{div}\,\left(\frac{\nabla f}{\sqrt{1+|\nabla f|^2}} \right)=\frac{1}{\lambda}\cdot\frac{\sqrt{1+|\nabla f|^2}}{x\cdot\nabla f-f}$ in $\R^n$, $f(0)=\mu$, for any $\lambda>\frac{1}{n-1}$ and $\mu<0$, which satisfies $f_r(r)>0$, $f_{rr}(r)>0$ and $rf_r(r)>f(r)$ for all $r>0$. We will also prove that $\lim_{r\to\infty}\frac{rf_r(r)}{f(r)}=\frac{\lambda (n-1)}{\lambda (n-1)-1}$.
\end{abstract}

\vskip 0.2truein

Key words: inverse mean curvature flow, self-similar solution, existence, asymptotic behaviour

AMS 2010 Mathematics Subject Classification: Primary 35K67, 35J75 Secondary 53C44

\vskip 0.2truein
\setcounter{equation}{0}
\setcounter{section}{0}

\section{Introduction}
\setcounter{equation}{0}
\setcounter{thm}{0}

Consider a family of immersions $F:M^n\times [0,T)\to\R^{n+1}$ of $n$-dimensional hypersurfaces in $\R^{n+1}$. We say that $M_t=F_t(M^n)$, $F_t(x)=F(x,t)$,  moves by the inverse mean curvature flow if 
\begin{equation*}
\frac{\1}{\1 t}F(x,t)=\frac{\nu}{H}\quad\forall x\in\R^n, 0<t<T
\end{equation*}
where $H(x,t)>0$ and $\nu$ are the mean curvature and unit exterior normal of the surface
$F_t$ at the point $F(x,t)$. Note that when $M_t$ is the graph $\2{F}(x,t)=(x.u(x,t))$ of some function $u:\R^n\times (0,T) \to \R$, $n\ge 1$, then
\begin{equation*}
\nu=\left(\frac{\nabla u}{\sqrt{1+|\nabla u|^2}},\frac{-1}{\sqrt{1+|\nabla u|^2}}\right).
\end{equation*}
Recently there has been a lot of study on the inverse mean curvature flow  for the compact case by 
 C.~Gerhardt, G.~Huisken, T.~Ilmanen, K.~Smoczyk, J.~Urbas and others  \cite{G},  \cite{HI1}, \cite{HI2}, \cite{HI3}, \cite{S}, \cite{U}. There are also a lot of progress for the non-compact case recently by B.~Allen, P.~Daskalopoulos, G.~Huisken, B.~Lambert, T.~Marquardt and J.~Scheuer \cite{A}, \cite{DH}, \cite{LS}, \cite{M1}, \cite{M2}. 

As observed by P.~Daskalopoulos and G.~Huisken in \cite{DH}, if $M_t$ is the graph $\2{F}(x,t)=(x.u(x,t))$ of some function $u:\R^n\times (0,T) \to \R$, $n\ge 1$, then $u$ satisfies 
\begin{equation}\label{imcf-graph-eqn}
u_t=-\sqrt{1+|\nabla u|^2}\left(\mbox{div}\,\left(\frac{\nabla u}{\sqrt{1+|\nabla u|^2}}\right)\right)^{-1}
\end{equation}
and if $f:\R^n\to \R$ is solution of 
\begin{equation}\label{imcf-graph-elliptic-eqn}
\mbox{div}\,\left(\frac{\nabla f}{\sqrt{1+|\nabla f|^2}}\right)=\frac{1}{\lambda}\cdot\frac{\sqrt{1+|\nabla f|^2}}{x\cdot\nabla f-f}\quad\mbox{ in }\R^n,
\end{equation}
then for any $\lambda>0$, the function
\begin{equation*}
u(x,t)=e^{\lambda t}f(e^{-\lambda t}x), \quad (x,t)\in\R^n\times\R
\end{equation*}
is a self-similar solution of \eqref{imcf-graph-eqn} in $\R^n\times \R$. In \cite{DH} and reference [7] of \cite{DH} P.~Daskalopoulos, G.~Huisken and J.R.~King also stated the existence of radially symmetric solution of \eqref{imcf-graph-elliptic-eqn} for any $n\ge 2$, $\lambda>\frac{1}{n-1}$ and $\mu:=f(0)<0$. Note that if $f$ is a radially symmetric solution of \eqref{imcf-graph-elliptic-eqn}, then $f$ satisfies
\begin{equation}\label{imcf-graph-ode}
f_{rr}+\frac{n-1}{r}\cdot(1+f_r^2)f_r-\frac{1}{\lambda}\cdot\frac{(1+f_r^2)^2}{rf_r-f}=0\quad\forall r>0
\end{equation}
and $f_r(0)=0$.
Since there is no proof of this result in \cite{DH}, in this paper I will give a detailed proof of the existence of solution of \eqref{imcf-graph-ode}. More precisely I will prove the following existence result.

\begin{thm}\label{existence_soln-thm}
For any $n\ge 2$, $\lambda>\frac{1}{n-1}$ and $\mu<0$, the equation 
\begin{equation}\label{imcf-graph-ode-initial-value-problem}
\left\{\begin{aligned}
&f_{rr}+\frac{n-1}{r}\cdot(1+f_r^2)f_r-\frac{1}{\lambda}\cdot\frac{(1+f_r^2)^2}{rf_r-f}=0\quad\forall r>0\\
&f(0)=\mu,\quad f_r(0)=0\end{aligned}\right.
\end{equation}
has a unique solution $f\in C^1([0,\infty))\cap C^2(0,\infty)$ which satisfies
\begin{equation}\label{f-structure-ineqn}
rf_r(r)>f(r)\quad\forall r\ge 0
\end{equation}
and
\begin{equation}\label{fr-+ve}
f_r(r)>0\quad\forall r>0.
\end{equation}
\end{thm}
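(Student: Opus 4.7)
The plan is to prove Theorem 1.1 in three steps: (i) a fixed-point construction of a unique local $C^1$ solution at $r=0$; (ii) preservation of the sign conditions $rf_r>f$ and $f_r>0$; and (iii) an a priori bound on $f_r$ that permits continuation to $[0,\infty)$.

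For step (i), I would recast (1.4) in its natural radial divergence form: writing $g(v)=v/\sqrt{1+v^2}$, one has $(r^{n-1}g(f_r))'=r^{n-1}\sqrt{1+f_r^2}/(\lambda(rf_r-f))$. Integrating from $0$ to $r$ yields the fixed-point system
\begin{equation*}
\frac{f_r(r)}{\sqrt{1+f_r(r)^2}}=\frac{1}{\lambda r^{n-1}}\int_0^r\frac{s^{n-1}\sqrt{1+f_r(s)^2}}{sf_r(s)-f(s)}\,ds,\qquad f(r)=\mu+\int_0^r f_r(s)\,ds.
\end{equation*}
Since $\mu<0$, the denominator of the integrand starts at $-\mu>0$, so a contraction mapping argument on a small ball of $C([0,\delta])$ produces a unique local solution. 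Evaluating (1.4) at $r=0$ gives $f_{rr}(0)=-1/(\lambda n\mu)>0$.

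For step (ii), set $\phi(r):=rf_r(r)-f(r)$, so $\phi(0)=-\mu>0$ and $\phi'(r)=rf_{rr}(r)$. The ODE rewrites as
\begin{equation*}
\phi'(r)=\frac{r(1+f_r^2)^2}{\lambda\phi(r)}-(n-1)(1+f_r^2)f_r(r).
\end{equation*}
If $\phi$ were to approach $0$ from above at some $r_1>0$, the first term would blow up to $+\infty$, forcing $\phi'>0$ near $r_1$ and contradicting $\phi\downarrow 0$; hence $\phi>0$ is preserved. Next, at a hypothetical first zero $r_0>0$ of $f_r$, the inequality $\phi(r_0)>0$ gives $f(r_0)<0$, so (1.4) yields $f_{rr}(r_0)=-1/(\lambda f(r_0))>0$, in contradiction with $f_{rr}(r_0)\le 0$ (which is forced by $f_r>0$ on $(0,r_0)$ and $f_r(r_0)=0$). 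Therefore $f_r>0$ for all $r>0$.

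For step (iii), factor (1.4) as
\begin{equation*}
f_{rr}(r)=(1+f_r^2)\left[\frac{1+f_r^2}{\lambda(rf_r-f)}-\frac{(n-1)f_r}{r}\right].
\end{equation*}
The hypothesis $\lambda(n-1)>1$ is used here: a careful comparison of the two terms in the bracketed expression shows that when $f_r$ becomes sufficiently large the bracket is negative, so $f_{rr}<0$ and $f_r$ cannot blow up in finite $r$. Together with $\phi>0$, this permits a standard continuation argument producing a global solution on $[0,\infty)$; uniqueness on any compact subinterval then follows from the contraction estimate of step (i) combined with a Gronwall-type argument. The main technical obstacle is the a priori bound in step (iii): the abstract's asymptotic $rf_r/f\to\lambda(n-1)/(\lambda(n-1)-1)$ indicates that the two terms in the bracket balance at leading order for large $r$, so the bound must be fine enough to detect the correct sign of the subleading contribution, which is precisely where the critical hypothesis $\lambda>1/(n-1)$ enters quantitatively.
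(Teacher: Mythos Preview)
Your overall strategy---local existence by a fixed-point argument, preservation of $rf_r>f$ and $f_r>0$, an a priori bound on $f_r$, then continuation---is the same as the paper's, and your divergence-form integral in step (i) is in fact cleaner than the paper's (which multiplies the ODE by $r$ and manipulates a more awkward integral identity). But step (iii) has a genuine gap. The claim that the bracket $\frac{1+f_r^2}{\lambda(rf_r-f)}-\frac{(n-1)f_r}{r}$ becomes negative once $f_r$ is large is not justified: a short computation shows the bracket is negative only when $f/(rf_r)<1-\frac{1}{\lambda(n-1)}-O(f_r^{-2})$, and you have no a priori upper bound on $f/(rf_r)$ beyond $f/(rf_r)<1$. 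If $f$ stays bounded while $f_r\to\infty$ on a finite interval your heuristic works, but if $f\to\infty$ simultaneously the two terms can balance and the sign is undetermined---indeed the asymptotic $rf_r/f\to\lambda(n-1)/(\lambda(n-1)-1)$ says the limiting ratio sits exactly on the borderline.

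The paper supplies the missing ingredient: it first proves \emph{strict convexity} $f_{rr}>0$ on the entire interval of existence (Lemma~2.4, by differentiating the ODE and running a first-zero argument for $f_{rr}$), which you never establish. Convexity is then used to close the $f_r$-bound (Lemma~2.5) by contradiction: assuming $f_r\to\infty$, one shows first that $f\to\infty$ (otherwise the bracket limit is $(1/\lambda-(n-1))/R_0<0$, contradicting $f_{rr}>0$), and then that $f_r/f$ must be bounded (otherwise along a subsequence $f/f_r\to 0$ and the same limit again forces $f_{rr}<0$); Gronwall on $f_r/f\le M_2$ then makes $f$ bounded, a final contradiction. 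A smaller point: your step (ii) argument that $\phi'\to+\infty$ as $\phi\to0^+$ tacitly assumes $f_r$ stays bounded, since the competing term $(n-1)(1+f_r^2)f_r$ could also blow up. The paper avoids this by substituting $f_r=(\phi+f)/r$ into the $\phi$-equation and proving a \emph{uniform} lower bound $\phi\ge\delta_2>0$ directly (Lemma~2.3), which is what the continuation step actually requires.
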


We also obtain the following large time behavior solution of \eqref{imcf-graph-ode-initial-value-problem}.

\begin{thm}\label{asymptotic-behaviour-time-infty-thm}
Let $n\ge 2$, $\lambda>\frac{1}{n-1}$, $\mu<0$ and $f\in C^1([0,\infty))\cap C^2(0,\infty)$ be the unique solution of \eqref{imcf-graph-ode-initial-value-problem}. Then
\begin{equation}\label{growth-rate}
\mbox{$\lim_{r\to\infty}$}\frac{rf_r(r)}{f(r)}=\frac{\lambda (n-1)}{\lambda (n-1)-1}.
\end{equation}
\end{thm}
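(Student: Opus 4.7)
My plan is to study the logarithmic derivative $g(r):=rf_r(r)/f(r)$, which is well defined for all $r$ large enough that $f(r)>0$. The monotonicity results already established in the paper (in particular $f_r>0$, $f_{rr}>0$, and $f_r\to\infty$) guarantee $f(r)\to\infty$, so such $r$ exist, and \eqref{f-structure-ineqn} gives $g(r)>1$ in this regime. Writing $\alpha:=\frac{\lambda(n-1)}{\lambda(n-1)-1}$, the goal is to show $g(r)\to\alpha$.

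I would first derive an ODE for $g$. Differentiating gives $g'=g(1-g)/r+rf_{rr}/f$; substituting $f_{rr}$ from \eqref{imcf-graph-ode-initial-value-problem} and using $f_r=gf/r$ together with $rf_r-f=(g-1)f$ yields
\begin{equation*}
rg'(r)=g(1-g)-(n-1)(1+f_r^2)g+\frac{r^2(1+f_r^2)^2}{\lambda(g-1)f^2}.
\end{equation*}
Since $r^2/f^2=g^2/f_r^2$, one has $r^2(1+f_r^2)^2/f^2=g^2(f_r^2+2+f_r^{-2})$, so dividing by $f_r^2$ recasts the equation as
\begin{equation*}
\frac{rg'(r)}{f_r^2}=F(g)+\varepsilon(r),\qquad F(g):=-(n-1)g+\frac{g^2}{\lambda(g-1)},
\end{equation*}
with $|\varepsilon(r)|=O(1/f_r^2)\to 0$ as $r\to\infty$ whenever $g$ stays in a compact subset of $(1,\infty)$. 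A direct computation shows $F(\alpha)=0$, $F(g)>0$ for $1<g<\alpha$, and $F(g)<0$ for $g>\alpha$; moreover $F(g)\to+\infty$ as $g\to 1^+$, while $F(g)\to-\infty$ as $g\to\infty$ since $\lambda(n-1)>1$. Thus $\alpha$ is the unique and attracting equilibrium of the reduced dynamics on $(1,\infty)$.

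Convergence $g\to\alpha$ then proceeds in three steps. \emph{Step 1 (boundedness).} The sign information for $F$ near both ends of $(1,\infty)$, combined with the ODE, yields a uniform bound $1+\eta\le g(r)\le M$ for all sufficiently large $r$, placing $g$ in a compact subset of $(1,\infty)$ on which $\varepsilon(r)\to 0$ uniformly. \emph{Step 2 ($\liminf g\le\alpha\le\limsup g$).} If $g(r)\le\alpha-\delta$ for all large $r$, then $F(g)\ge c_0(\delta)>0$, hence $g'(r)\ge\frac{c_0}{2r}f_r^2$ eventually; since $f_r\to\infty$, the integral $\int^\infty f_r^2/r\,dr$ diverges, forcing $g\to+\infty$, contradicting Step~1. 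The symmetric argument excludes $g\ge\alpha+\delta$ eventually. \emph{Step 3 (trapping).} Fix $\delta>0$, and using Steps 1 and 2 choose $R$ with $g(R)\in(\alpha-\delta,\alpha+\delta)$ and $|\varepsilon(r)|<\tfrac12\min\{|F(\alpha-\delta)|,|F(\alpha+\delta)|\}$ for $r\ge R$. If $g$ first exited the interval $(\alpha-\delta,\alpha+\delta)$ at some $R_1>R$, say at $g(R_1)=\alpha+\delta$, then $g'(R_1)\ge 0$; but the ODE gives $rg'(R_1)/f_r^2=F(\alpha+\delta)+\varepsilon(R_1)<0$, a contradiction. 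The lower boundary is excluded symmetrically. Hence $|g(r)-\alpha|<\delta$ for all $r\ge R$, proving \eqref{growth-rate}.

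The main obstacle is the boundedness step and the precise quantitative control of $\varepsilon$. One must rule out both $g\to\infty$ (handled by the negativity of $F$ at infinity, which uses $\lambda>\frac{1}{n-1}$) and $g\to 1^+$ (where $g^2/[\lambda(g-1)]$ in $F$ blows up positively, pushing $g$ back above $1$ through the same first-crossing argument as in Step 3). These are structural features of \eqref{imcf-graph-ode-initial-value-problem}, but verifying them rigorously, together with the input $f_r\to\infty$ from earlier sections, is the main technical content of the proof.
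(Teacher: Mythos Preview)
Your proposal is correct and follows essentially the same route as the paper: both introduce $q(r)=rf_r/f$, derive its ODE (the paper's \eqref{q-eqn}, which you recast as $rg'/f_r^2=F(g)+o(1)$), and use the sign structure of the leading part together with $f_r\to\infty$ from Lemma~\ref{f'-to-infty-lem} to trap $q$ in $(\alpha-\varepsilon,\alpha+\varepsilon)$ for large $r$. The only organizational difference is that the paper works directly with the full right-hand side of \eqref{q-eqn} and establishes the upper barrier $q\le\alpha+\varepsilon$ first (then uses it in the lower-barrier estimate), thereby bypassing the separate boundedness step you flag as the main obstacle; the mathematical content is the same.
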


\begin{rmk}
Note that the condition $\mu<0$ is imposed to ensure the positivity of the denominator of the third term of \eqref{imcf-graph-ode-initial-value-problem}  so that one can obtain the convexity of the solution $f$ of \eqref{imcf-graph-ode-initial-value-problem} which is stated in Corollary \ref{f-to-infty-cor}.
\end{rmk}

The plan of the paper is as follows. In section 2 we will prove Theorem \ref{existence_soln-thm}. In section 3 we will prove Theorem \ref{asymptotic-behaviour-time-infty-thm}.

\section{Existence of solution}
\setcounter{equation}{0}
\setcounter{thm}{0} 

In this section we will prove Theorem \ref{existence_soln-thm}. We will first use a fixed point argument to prove the  existence of a solution of \eqref{imcf-graph-ode-initial-value-problem} in a small interval of the origin. The local solution is then extended to a global solution of \eqref{imcf-graph-ode-initial-value-problem} by a continuity argument using another fixed argument. We first start with a lemma.

\begin{lem}\label{local-existence-lem}
For any $n\ge 2$, $\lambda>0$ and $\mu<0$, there exists a constant $R_0>0$ such that the equation 
\begin{equation}\label{imcf-graph-ode-initial-value-problem2}
\left\{\begin{aligned}
&f_{rr}+\frac{n-1}{r}\cdot(1+f_r^2)f_r-\frac{1}{\lambda}\cdot\frac{(1+f_r^2)^2}{rf_r-f}=0\quad\mbox{ in }(0,R_0)\\
&f(0)=\mu,\quad f_r(0)=0\end{aligned}\right.
\end{equation}
has a unique solution $f\in C^1([0,R_0))\cap C^2(0,R_0)$ which satisfies
\begin{equation}\label{f-structure-ineqn2}
rf_r(r)-f(r)>0\quad\mbox{ in }[0,R_0).
\end{equation}
\end{lem}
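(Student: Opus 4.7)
The plan is to recast \eqref{imcf-graph-ode-initial-value-problem2} as a fixed-point equation for the auxiliary variable $w:=f_r/\sqrt{1+f_r^2}$ and apply Banach's contraction mapping theorem on a small interval $[0,R_0]$. Rewriting the ODE in divergence form, $(r^{n-1}f_r/\sqrt{1+f_r^2})_r=r^{n-1}\sqrt{1+f_r^2}/[\lambda(rf_r-f)]$, and integrating from $0$ to $r$ using $f_r(0)=0$, I obtain the equivalent system
\begin{equation*}
w(r)=\frac{1}{\lambda r^{n-1}}\int_0^r\frac{s^{n-1}\,ds}{\sqrt{1-w(s)^2}\bigl(s\,w(s)/\sqrt{1-w(s)^2}-f(s)\bigr)},\qquad f(r)=\mu+\int_0^r\frac{w(s)}{\sqrt{1-w(s)^2}}\,ds,
\end{equation*}
which I read as a single fixed-point equation $w=Tw$ on the closed set $X_{R_0}:=\{w\in C([0,R_0]):w(0)=0,\,\|w\|_\infty\le 1/2\}$ inside $C([0,R_0])$ with the sup norm.

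I then check, for $R_0$ to be chosen sufficiently small, that: (i) $T$ is well-defined on $X_{R_0}$, since for $w\in X_{R_0}$ one has $|f(s)-\mu|\le CR_0$, and hence $s\,w(s)/\sqrt{1-w(s)^2}-f(s)\ge(-\mu)/2>0$; (ii) $(Tw)(0)=0$ and $\|Tw\|_\infty\le CR_0/\lambda\le 1/2$, because the integrand is bounded near $s=0$, so the inner integral is $O(r^n)$ and absorbs the singular $1/r^{n-1}$; (iii) the integrand is Lipschitz in $w$ with a constant uniform on $X_{R_0}$, which yields $\|Tw_1-Tw_2\|_\infty\le CR_0\|w_1-w_2\|_\infty$, a contraction for small $R_0$. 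Banach's fixed-point theorem then produces a unique $w\in X_{R_0}$; the associated $f$ lies in $C^1([0,R_0))\cap C^2(0,R_0)$ (use $f_r=w/\sqrt{1-w^2}$ and read $f_{rr}$ off the ODE) and solves \eqref{imcf-graph-ode-initial-value-problem2}. The inequality \eqref{f-structure-ineqn2} at $r=0$ reads $-\mu>0$, and extends to all of $[0,R_0)$ by continuity after a final shrinking of $R_0$.

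The main technical obstacle is the singular prefactor $1/r^{n-1}$ in the definition of $Tw$: a priori it could destroy both continuity of $Tw$ at the origin and the contraction estimate. The saving grace is the factor $s^{n-1}$ inside the integral, so that $\int_0^r s^{n-1}\,ds=r^n/n$ exactly cancels the singularity and leaves one extra factor of $r$. That extra $r$ is precisely what supplies the contraction constant of size $O(R_0)$ and lets me close the argument by shrinking the interval. Uniqueness of solutions $f\in C^1([0,R_0))\cap C^2(0,R_0)$ beyond the ansatz class $X_{R_0}$ is automatic, since any such solution forces its associated $w$ into $X_{R_0}$ for $R_0$ sufficiently small, by continuity of $f_r$ at $0$ together with \eqref{f-structure-ineqn2} at the origin.
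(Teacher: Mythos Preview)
Your argument is correct. Both your proof and the paper's are Banach fixed-point arguments on a small interval, but the setups differ. The paper works directly with the pair $(f,f_r)$: it multiplies the ODE by $r$, integrates once, solves the resulting first-order linear ODE for $\int_0^r f_r$, and arrives at a two-component map $\Phi(g,h)$ on a space equipped with the weighted norm $\max\{\|g\|_\infty,\|s^{-1/2}h\|_\infty\}$; the weight $s^{-1/2}$ is what tames the singularity at the origin. You instead exploit the divergence structure of the equation, passing to $w=f_r/\sqrt{1+f_r^2}$ so that one integration gives a single clean integral equation with the natural weight $s^{n-1}$ already built in; the factor $r^{n}/r^{n-1}=r$ then supplies the contraction constant with no extra bookkeeping. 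Your route is shorter and more geometric; the paper's route is more elementary in that it avoids the change of variables but pays for this with the auxiliary function $E$, the double integral in $\Phi_2$, and the weighted norm. Two small points worth making explicit in your write-up: the Lipschitz dependence of the integrand on $w$ is partly nonlocal (through $f=\mu+\int_0^\cdot w/\sqrt{1-w^2}$), so the estimate $\|Tw_1-Tw_2\|_\infty\le CR_0\|w_1-w_2\|_\infty$ uses $\|f_1-f_2\|_\infty\le CR_0\|w_1-w_2\|_\infty$; and your step (i) already gives $rf_r-f\ge|\mu|/2$ on the whole interval, so no ``final shrinking'' is needed for \eqref{f-structure-ineqn2}.
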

\begin{proof}
Uniqueness of solution of \eqref{imcf-graph-ode-initial-value-problem2} follows from standard ODE theory. Hence we only need to prove existence of solution of \eqref{imcf-graph-ode-initial-value-problem2}. We first observe that if $f$ satisfies \eqref{imcf-graph-ode-initial-value-problem2} and \eqref{f-structure-ineqn2} for some constant $R_0>0$, then by multiplying \eqref{imcf-graph-ode-initial-value-problem2} by $r$ and integrating over $(0,r)$,
we get
\begin{equation*}
\int_0^rsf_{rr}(s)\,ds+(n-1)\int_0^r(1+f_r(s)^2)f_r(s)\,ds=\frac{1}{\lambda}\int_0^r\frac{s(1+f_r(s)^2)^2}{sf_r(s)-f(s)}\,ds\quad \forall 0<r<R_0.
\end{equation*}
Hence
\begin{equation}\label{imcf-graph-ode-integral}
rf_r(r)+(n-2)\int_0^rf_r(s)\,ds=\frac{1}{\lambda}\int_0^r\frac{s(1+f_r(s)^2)^2}{sf_r(s)-f(s)}\,ds-(n-1)\int_0^rf_r(s)^3\,ds\quad \forall 0<r<R_0.
\end{equation}
Let 
\begin{equation}\label{big-h-defn}
H(r)=\int_0^rf_r(s)\,ds
\end{equation}
and
\begin{equation}\label{e-defn}
E(r)=\frac{1}{\lambda}\int_0^r\frac{s(1+f_r(s)^2)^2}{sf_r(s)-f(s)}\,ds-(n-1)\int_0^rf_r(s)^3\,ds.
\end{equation}
Then \eqref{imcf-graph-ode-integral} is equivalent to 
\begin{equation}\label{H-E-eqn0}
rH_r(r)+(n-2)H(r)=E(r)\quad \forall 0<r<R_0.
\end{equation}
Hence
\begin{equation}\label{H-E-eqn}
H(r)=\frac{1}{r^{n-2}}\int_0^r\rho^{n-3}E(\rho)\,d\rho\quad \forall 0<r<R_0.
\end{equation}
Then by \eqref{H-E-eqn0} and \eqref{H-E-eqn},
\begin{align}\label{f'-eqn}
f_r(r)=&H_r(r)=\frac{1}{r}(E(r)-(n-2)H(r))\notag\\
=&\frac{1}{r}\left\{\frac{1}{\lambda}\int_0^r\frac{s(1+f_r(s)^2)^2}{sf_r(s)-f(s)}\,ds-(n-1)\int_0^rf_r(s)^3\,ds\right.\notag\\
&\qquad -\frac{(n-2)}{r^{n-2}}\int_0^r\rho^{n-3}\left[\frac{1}{\lambda}\int_0^{\rho}\frac{s(1+f_r(s)^2)^2}{sf_r(s)-f(s)}\,ds\right.
-(n-1)\left.\left.\int_0^{\rho}f_r(s)^3\,ds\right]\,d\rho\right\}
\end{align}
which suggests one to use a fixed point argument to prove existence of solution of \eqref{imcf-graph-ode-initial-value-problem2}. 

Let $0<\ve<1$. We now define the Banach space 
$$
\cX_\ve:=\left\{(g,h): g, h\in C\left( [0,\ve]; \R\right) \,\,\mbox{ such that }\,\, s^{-1/2}h(s)\in L^{\infty}(0,\ve) \right\}
$$ 
with a norm given by
$$||(g,h)||_{\cX_\ve}=\max\left\{\|g\|_{L^\infty([0, \ve])} ,\|s^{-1/2}h(s)\|_{L^{\infty}(0,\ve)}\right\}.$$  
For any $(g,h)\in \cX_\ve,$ we define  
$$\Phi(g,h):=\left(\Phi_1(g,h),\Phi_2(g,h)\right),$$ 
where for $0<r\leq\ve,$
\begin{equation}\label{eq-existence-contraction-map}
\left\{\begin{aligned}
&\Phi_1(g,h)(r):=\mu+\int_0^r h(s)\,ds,\\
&\Phi_2(g,h)(r):=\frac{1}{r}\left\{E(g,h)(r)-\frac{(n-2)}{r^{n-2}}\int_0^r\rho^{n-3}E(g,h)(\rho)\,d\rho\right\}
\end{aligned}\right.
\end{equation}
with
\begin{equation*}
E(g,h)(r)=\frac{1}{\lambda}\int_0^r\frac{s(1+h(s)^2)^2}{sh(s)-g(s)}\,ds-(n-1)\int_0^rh(s)^3\,ds.
\end{equation*}
For any $0<\eta\le |\mu|/4$, let $$\cD_{\ve,\eta}:=\left\{ (g,h)\in \cX_\ve:  ||(g,h)-(\mu,0)||_{\cX_{\ve}}\leq \eta\right\}.$$
Note that $\cD_{\ve,\eta}$  is a closed subspace of $\cX_\ve$. We will show that if $\ve\in(0,1)$ is sufficiently small, the map $(g,h)\mapsto\Phi(g,h)$ will have  a unique  fixed point in $\cD_{\ve,\eta}$.

We first  prove that $\Phi(\cD_{\ve,\eta})\subset \cD_{\ve,\eta}$ if $\ve\in(0,1)$ is sufficiently  small. Let $(g,h)\in \cD_{\ve,\eta}$. Then 
\begin{equation*}
|s^{-1/2}h(s)|\le\eta\le |\mu|/4\quad\mbox{ and } \quad|g(s)-\mu|\le |\mu|/4\quad\forall 0<s\le\3.
\end{equation*}
Hence
\begin{equation}\label{h-g-bd}
|h(s)|\le \eta s^{1/2}\le (|\mu|/4)s^{1/2}\quad\mbox{ and } \quad \frac{5\mu}{4}\le g(s)\le\frac{3\mu}{4}\quad\forall 0\le s\le\3.
\end{equation} 
Thus
\begin{equation}\label{h-g-lower-bd}
sh(s)-g(s)\ge\frac{3|\mu|}{4}-\frac{|\mu|}{4}=\frac{|\mu|}{2}>0\quad\forall 0\le s\le\3.
\end{equation} 
Then
\begin{equation}\label{phi1-bd}
|\Phi_1(g,h)(r)-\mu|\le\int_0^r |h(s)|\,ds\le\eta\ve\le\eta\quad\forall 0\le r\le\3.
\end{equation}
Now by  \eqref{h-g-bd} and \eqref{h-g-lower-bd},
\begin{align}\label{E-upper-bd}
\left|E(g,h)(r)\right|\le&\left|\frac{1}{\lambda}\int_0^r\frac{s(1+h(s)^2)^2}{sh(s)-g(s)}\,ds\right|+(n-1)\left|\int_0^rh(s)^3\,ds\right|\notag\\
\le&\frac{2\left(1+(|\mu|^2/16)\right)^2}{\lambda|\mu|}\int_0^rs\,ds+(n-1)\left(\frac{|\mu|}{4}\right)^3\int_0^rs^{3/2}\,ds\notag\\
\le&c_1(r^2+r^{5/2})\quad\forall 0\le r\le\3
\end{align}
where
\begin{equation*}
c_1=\max\left(\frac{\left(1+(|\mu|^2/16)\right)^2}{\lambda|\mu|},\frac{2(n-1)}{5}\left(\frac{|\mu|}{4}\right)^3\right).
\end{equation*}
Then by \eqref{E-upper-bd},
\begin{align}\label{E-integral-upper-bd}
\frac{(n-2)}{r^{n-2}}\int_0^r\rho^{n-3}|E(g,h)(\rho)|\,d\rho
\le&\frac{(n-2)c_1}{r^{n-2}}\int_0^r\rho^{n-3}(\rho^2+\rho^{5/2})\,d\rho\notag\\
\le&(n-2)c_1(r^2+r^{5/2})\quad\forall 0<r\le\3.
\end{align}
By \eqref{eq-existence-contraction-map}, \eqref{E-upper-bd} and \eqref{E-integral-upper-bd},
\begin{equation}\label{phi2-bd}
\left|r^{-1/2}\Phi_2(g,h)(r)\right|\le (n-1)c_1 (r^{1/2}+r)\le 2(n-1)c_1r^{1/2}\le\eta\quad\forall 0<r\le\3
\end{equation}
if $0<\ve\le\ve_1$ where
$$
\ve_1=\min \left(1,\frac{\eta^2}{4(n-1)^2c_1^2}\right).
$$
Thus  by \eqref{phi1-bd} and \eqref{phi2-bd}, $\Phi(\cD_{\ve,\eta})\subset \cD_{\ve,\eta}$ for any  $0<\ve\le\ve_1$.

We now let $0<\ve\le\ve_1$. Let $(g_1,h_1),(g_2,h_2)\in \cD_{\ve,\eta}$ and $\delta:=||(g_1,h_1)-(g_2,h_2)||_{\cX_\ve}$. Then
\begin{equation}\label{h12-g12-difference}
\left\{\begin{aligned}
&s^{-1/2}|h_1(s)-h_2(s)|\le\delta\quad\,\,\forall 0<s\le\3\\
&|g_1(s)-g_2(s)|\le\delta\qquad\quad\forall 0<s\le\3.
\end{aligned}\right.
\end{equation}
By \eqref{h-g-bd} and \eqref{h-g-lower-bd},
\begin{equation}\label{hi-gi-bd}
|h_i(s)|\le \eta s^{1/2}\le (|\mu|/4)s^{1/2}, \, \frac{5\mu}{4}\le g_i(s)\le\frac{3\mu}{4}\,\mbox{ and }\, sh_i(s)-g_i(s)\ge\frac{|\mu|}{2}>0\,\,\forall 0\le s\le\3, i=1,2.
\end{equation}
Now by \eqref{h12-g12-difference},
\begin{equation}\label{phi1-contraction}
|\Phi_1(g_1,h_1)(r)-\Phi_1(g_2,h_2)(r)|\le\int_0^r |h_1(s)-h_2(s)|\,ds\le\delta\int_0^r s^{1/2}\,ds\le\frac{2\ve^{3/2}}{3}\delta \le\frac{2}{3}\delta\quad\forall 0\le r\le\3
\end{equation}
and
\begin{align}\label{phi2-difference}
&|\Phi_2(g_1,h_1)(r)-\Phi_2(g_2,h_2)(r)|\notag\\
\le&\frac{1}{r}\left\{|E(g_1,h_1)(r)-E(g_2,h_2)(r)|+\frac{(n-2)}{r^{n-2}}\int_0^r\rho^{n-3}|E(g_1,h_1)(\rho)-E(g_2,h_2)(\rho)|\,d\rho\right\}\quad\forall 0< r\le\3.
\end{align}
By \eqref{h12-g12-difference} and \eqref{hi-gi-bd},
\begin{align}\label{h-g-bd20}
&\left|\frac{(1+h_1(s)^2)^2}{sh_1(s)-g_1(s)}-\frac{(1+h_2(s)^2)^2}{sh_2(s)-g_2(s)}\right|\notag\\
\le &4\frac{\left|(1+h_1(s)^2)^2(sh_2(s)-g_2(s))-(1+h_2(s)^2)^2(sh_1(s)-g_1(s))\right|}{|\mu|^2}\quad\forall 0\le r\le\3
\end{align}
and
\begin{align}\label{h-g-bd21}
&\left|(1+h_1(s)^2)^2(sh_2(s)-g_2(s))-(1+h_2(s)^2)^2(sh_1(s)-g_1(s))\right|\notag\\
\le&\left|(1+h_1(s)^2)^2-(1+h_2(s)^2)^2\right||sh_2(s)-g_2(s)|+(1+h_2(s)^2)^2|sh_2(s)-g_2(s)-sh_1(s)+g_1(s)|\notag\\
\le&|h_1(s)-h_2(s)||h_1(s)+h_2(s)|\left|2+h_1(s)^2+h_2(s)^2\right|(|sh_2(s)|+|g_2(s)|)\notag\\
&\qquad+(1+h_2(s)^2)^2(s|h_2(s)-h_1(s)|+|g_2(s)-g_1(s)|)\notag\\
\le &\delta s^{1/2}\cdot 2\eta(2+2\eta^2)\left(\frac{|\mu|}{4}+\frac{5|\mu|}{4}\right)+(1+\eta^2)^2(s^{3/2}+1)\delta\notag\\
\le&c_2\delta\quad\forall 0\le s\le\3
\end{align}
where
\begin{equation*}
c_2=6\eta (1+\eta^2)|\mu|+2(1+\eta^2)^2.
\end{equation*}
By \eqref{h-g-bd20} and \eqref{h-g-bd21},
\begin{equation}\label{integral-ineqn1}
\int_0^r\left|\frac{s(1+h_1^2)^2}{sh_1(s)-g_1(s)}-\frac{s(1+h_2^2)^2}{sh_2(s)-g_2(s)}\right|\,ds\le\frac{2c_2r^2}{|\mu|^2}\delta\quad\forall 0\le r\le\3
\end{equation}
and by \eqref{h12-g12-difference} and \eqref{hi-gi-bd},
\begin{align}\label{integral-ineqn2}
\int_0^r|h_1(s)^3-h_2(s)^3|\,ds\le&\int_0^r|h_1(s)-h_2(s)||h_1(s)^2+h_1(s)h_2(s)+h_2(s)^2|\,ds\notag\\
\le&3\eta^2\delta\int_0^rs^{3/2}\,ds\notag\\
\le&\frac{6\eta^2r^{5/2}}{5}\delta\quad\forall 0\le r\le\3.
\end{align}
By \eqref{integral-ineqn1} and \eqref{integral-ineqn2},
\begin{equation}\label{E-difference}
|E(g_1,h_1)(r)-E(g_2,h_2)(r)|\le c_3(r^2+r^{5/2})\delta\quad\forall 0\le r\le\3
\end{equation}
where 
\begin{equation*}
c_3=\max\left(\frac{2c_2}{|\mu|^2\lambda},\frac{6(n-1)\eta^2}{5}\right).
\end{equation*}
Hence
\begin{align}\label{E-difference-integral}
\frac{(n-2)}{r^{n-2}}\int_0^r\rho^{n-3}|E(g_1,h_1)(\rho)-E(g_2,h_2)(\rho)|\,d\rho
\le&\frac{(n-2)c_3\delta}{r^{n-2}}\int_0^r\rho^{n-3}(\rho^2+\rho^{5/2})\,d\rho\notag\\
\le&(n-2)c_3(r^2+r^{5/2})\delta
\quad\forall 0\le r\le\3.
\end{align}
By \eqref{phi2-difference}, \eqref{E-difference} and \eqref{E-difference-integral},
\begin{equation}\label{phi2-contraction}
r^{-1/2}|\Phi_2(g_1,h_1)(r)-\Phi_2(g_2,h_2)(r)|\le(n-1)c_3(r^{1/2}+r)\delta\le 2(n-1)c_3r^{1/2}\delta\quad\forall 0<r\le\3.
\end{equation}
We now let 
\begin{equation*}
\ve_2=\min\left(\ve_1,\frac{1}{9(n-1)^2c_3^2}\right)
\end{equation*}
and $0<\ve\le\ve_2$.
Then by \eqref{phi1-contraction} and \eqref{phi2-contraction},
\begin{equation*}
\|\Phi(g_1,h_1)-\Phi(g_2,h_2)\|_{\cX_\ve}\le\frac{2}{3}\|(g_1,h_1)-(g_2,h_2)\|_{\cX_\ve}\quad\forall (g_1,h_1),(g_2,h_2)\in \cD_{\ve,\eta}.
\end{equation*}
Hence $\Phi$ is a contraction map on $\cD_{\ve,\eta}$. Then by the Banach fixed point theorem the map $\Phi$ has a unique fixed point. Let $(g,h)\in \cD_{\ve,\eta}$ be the unique fixed point of the map $\Phi$. Then
\begin{equation*}
\Phi(g,h)=(g,h).
\end{equation*}
Hence 
\begin{equation*}
g(r)=\mu+\int_0^r h(s)\,ds\quad\forall 0<r<\3\quad
\mbox{ and }\quad g(0)=\mu
\end{equation*}
which implies
\begin{equation}\label{g-eqn}
g_r(r)=h(r)\quad\forall 0<r<\3\quad
\mbox{ and }\quad g(0)=\mu
\end{equation}
and
\begin{equation*}
h(r)=\frac{1}{r}\left\{E(g,h)(r)-\frac{(n-2)}{r^{n-2}}\int_0^r\rho^{n-3}E(g,h)(\rho)\,d\rho
\right\}\quad\forall 0<r<\3.
\end{equation*}
Thus
\begin{equation}\label{h-formula}
r^{n-1}h(r)=r^{n-2}E(g,h)(r)-(n-2)\int_0^r\rho^{n-3}E(g,h)(\rho)\,d\rho \quad\forall 0<r<\3.
\end{equation}
Differentiating \eqref{h-formula} with respect to $r$, $\forall 0<r<\3$,
\begin{equation*}
(n-1)r^{n-2}h(r)+r^{n-1}h_r(r)=r^{n-2}\frac{\1}{\1 r} E(g,h)(r)=r^{n-2}\left\{\frac{1}{\lambda}\frac{r(1+h(r)^2)^2}{rh(r)-g(r)}-(n-1)h(r)^3\right\}.
\end{equation*}
Hence
\begin{equation}\label{h-eqn}
h_r(r)+(n-1)\frac{(h(r)+h(r)^3)}{r}=\frac{1}{\lambda}\frac{(1+h(r)^2)^2}{rh(r)-g(r)}\quad\forall 0<r<\3.
\end{equation}
By \eqref{h-g-lower-bd}, \eqref{g-eqn}, \eqref{h-formula} and \eqref{h-eqn}, $g\in C^1([0,\ve))\cap C^2(0,\ve)$ satisfies \eqref{imcf-graph-ode-initial-value-problem2} and \eqref{f-structure-ineqn2} with $R_0=\ve$ and the lemma follows.

\end{proof}

\begin{lem}\label{local-existence-extension-lem}
Let $n\ge 2$, $\lambda>0$, $r_0'\ge r_1\ge r_0>0$, $a_1>0$ and $a_0, b_0\in\R$, $|a_0|, |b_0|\le M$ for some constant $M>0$ be such that
\begin{equation}\label{a0-b0-positivity-relation}
r_1b_0-a_0\ge a_1.
\end{equation}
Then there exists a constant $\delta_1>0$ depending on $a_1$, $r_0$, $r_0'$ and $M$, but is independent of $r_1$ such that there exists a unique solution $f\in C^2([r_1,r_1+\delta_1))$ of 
\begin{equation}\label{imcf-graph-ode-bdary-value-problem}
\left\{\begin{aligned}
&f_{rr}+\frac{n-1}{r}\cdot(1+f_r^2)f_r-\frac{1}{\lambda}\cdot\frac{(1+f_r^2)^2}{rf_r-f}=0\quad\mbox{ in }(r_1,r_1+\delta_1)\\
&f(r_1)=a_0,\quad f_r(r_1)=b_0
\end{aligned}\right.
\end{equation}
which satisfies
\begin{equation}\label{f-structure-ineqn10}
rf_r(r)>f(r)\quad\forall r\in [r_1,r_1+\delta_1).
\end{equation}
\end{lem}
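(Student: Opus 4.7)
The plan is to recast \eqref{imcf-graph-ode-bdary-value-problem} as the first-order system
\begin{equation*}
f_r=h,\qquad h_r=-\frac{n-1}{r}(1+h^2)h+\frac{1}{\lambda}\cdot\frac{(1+h^2)^2}{rh-f},
\end{equation*}
whose right-hand side is smooth on the open set $\{r>0,\ rh-f>0\}$, and to apply the Banach fixed point theorem on a closed ball in the Banach space $\cX_\ve:=C([r_1,r_1+\ve];\R^2)$ equipped with the sup norm. Since $r_1b_0-a_0\ge a_1>0$ by hypothesis, the initial point lies in this smooth regime. The main technical point is that the length $\delta_1=\ve$ must be chosen to depend only on $(n,\lambda,a_1,r_0,r_0',M)$ and not on $r_1$ itself.

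For $\eta,\ve\in(0,1)$ to be fixed later, I would set
\begin{equation*}
\cK_{\ve,\eta}:=\{(g,h)\in\cX_\ve:\|g-a_0\|_\infty\le\eta,\ \|h-b_0\|_\infty\le\eta\}
\end{equation*}
and introduce the Picard map $\Phi=(\Phi_1,\Phi_2)$ with $\Phi_1(g,h)(r)=a_0+\int_{r_1}^r h(s)\,ds$ and
\begin{equation*}
\Phi_2(g,h)(r)=b_0+\int_{r_1}^r\left(-\frac{n-1}{s}(1+h(s)^2)h(s)+\frac{1}{\lambda}\cdot\frac{(1+h(s)^2)^2}{sh(s)-g(s)}\right)ds.
\end{equation*}
The key uniform lower bound on $\cK_{\ve,\eta}$ is
\begin{equation*}
sh(s)-g(s)\ge(r_1b_0-a_0)-|sh(s)-r_1b_0|-|g(s)-a_0|\ge a_1-(r_0'+1)\eta-\ve M-\eta
\end{equation*}
for $s\in[r_1,r_1+\ve]\subset[r_0,r_0'+1]$; choosing $\eta$ and $\ve$ small in terms of $(a_1,r_0',M)$ alone ensures $sh-g\ge a_1/2$. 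Combined with $s\ge r_0$ and $|h|\le M+1$, this yields a pointwise bound on the integrands by a constant $C=C(n,\lambda,a_1,r_0,r_0',M)$, from which $\Phi(\cK_{\ve,\eta})\subset\cK_{\ve,\eta}$ for sufficiently small $\ve$.

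For the contraction step the same a priori bounds $sh_i-g_i\ge a_1/2$ and $|h_i|\le M+1$ make the integrands Lipschitz in $(g,h)$ with a constant $L=L(n,\lambda,a_1,r_0,r_0',M)$, giving
\begin{equation*}
\|\Phi(g_1,h_1)-\Phi(g_2,h_2)\|_{\cX_\ve}\le\ve L\,\|(g_1,h_1)-(g_2,h_2)\|_{\cX_\ve}.
\end{equation*}
Taking $\delta_1=\ve$ small enough to enforce both $\ve L\le 1/2$ and the self-mapping thresholds above (all depending only on the stated parameters) and applying the Banach fixed point theorem produces a unique fixed point $(g,h)\in\cK_{\ve,\eta}$. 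Differentiating the integral equations once shows $g\in C^1$ and $h\in C^1$, so $f:=g\in C^2([r_1,r_1+\delta_1))$ solves \eqref{imcf-graph-ode-bdary-value-problem}; \eqref{f-structure-ineqn10} follows from $rh-g\ge a_1/2>0$ throughout the interval, and uniqueness in the full $C^2$ class follows from standard ODE theory since the right-hand side is locally Lipschitz on $\{rh-f>0\}$. The main obstacle is the bookkeeping required to guarantee that every constant appearing above depends only on $(n,\lambda,a_1,r_0,r_0',M)$ and is uniform in $r_1\in[r_0,r_0']$; this is what distinguishes the statement from the trivial application of Picard--Lindel\"of at a single initial point.
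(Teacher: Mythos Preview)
Your proposal is correct and follows essentially the same approach as the paper: a Banach fixed point argument on a closed ball in $C([r_1,r_1+\ve];\R^2)$, using the key uniform lower bound $sh-g\ge a_1/2$ and tracking that all constants depend only on $(n,\lambda,a_1,r_0,r_0',M)$. The only cosmetic difference is that the paper first multiplies the ODE by $r$ and integrates by parts (a manipulation carried over from the $r_1=0$ case of Lemma~\ref{local-existence-lem}) to obtain a slightly different form of $\Phi_2$, whereas you use the direct Picard integral; since $r_1\ge r_0>0$ here, the $1/s$ factor is harmless and your simpler form works just as well.
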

\begin{proof}
Uniqueness of solution of \eqref{imcf-graph-ode-bdary-value-problem} follows from standard ODE theory. Hence we only need to prove existence of solution of \eqref{imcf-graph-ode-bdary-value-problem}. We first observe that if $f$ satisfies \eqref{imcf-graph-ode-bdary-value-problem} and \eqref{f-structure-ineqn10} for some constant $\delta_1>0$, then by multiplying \eqref{imcf-graph-ode-bdary-value-problem} by $r$ and integrating over $(r_1,r)$,
we get $\forall r_1<r<r_1+\delta_1$,
\begin{equation*}
\int_{r_1}^rsf_{rr}(s)\,ds+(n-1)\int_{r_1}^r(1+f_r(s)^2)f_r(s)\,ds=\frac{1}{\lambda}\int_{r_1}^r\frac{s(1+f_r(s)^2)^2}{sf_r(s)-f(s)}\,ds.
\end{equation*}
Hence $\forall r_1<r<r_1+\delta_1$,
\begin{equation*}
rf_r(r)-r_1b_0+(n-2)\int_{r_1}^rf_r(s)\,ds=\frac{1}{\lambda}\int_{r_1}^r\frac{s(1+f_r(s)^2)^2}{sf_r(s)-f(s)}\,ds-(n-1)\int_{r_1}^rf_r(s)^3\,ds.
\end{equation*}
Thus $\forall r_1<r<r_1+\delta_1$,
\begin{equation*}
f_r(r)=\frac{1}{r}\left\{\frac{1}{\lambda}\int_{r_1}^r\frac{s(1+f_r(s)^2)^2}{sf_r(s)-f(s)}\,ds-(n-1)\int_{r_1}^rf_r(s)^3\,ds-(n-2)\int_{r_1}^rf_r(s)\,ds\right\}+\frac{r_1}{r}b_0
\end{equation*}
which suggests one to use a fixed point argument to prove existence of solution of \eqref{imcf-graph-ode-bdary-value-problem}. 

Let $\ve_1=\min\left(\frac{1}{3},\frac{a_1}{4(M+r_0'+1)}\right)$ and $0<\ve\le\ve_1$. We now define the Banach space 
$$
\cX_\ve':=\left\{(g,h): g, h\in C\left( [r_1,r_1+\ve]; \R\right)\right\}
$$ 
with a norm given by
$$||(g,h)||_{\cX_\ve'}=\max\left\{\|g\|_{L^\infty(r_1,r_1+\ve)} ,\|h(s)\|_{L^{\infty}(r_1,r_1+\ve)}\right\}.$$  
For any $(g,h)\in \cX_\ve',$ we define  
$$\Phi(g,h):=\left(\Phi_1(g,h),\Phi_2(g,h)\right),$$ 
where for $r_1<r<r_1+\ve,$
\begin{equation}\label{eq-extension-existence-contraction-map}
\left\{
\begin{aligned}
&\Phi_1(g,h)(r):=a_0+\int_{r_1}^r h(s)\,ds,\\
&\Phi_2(g,h)(r):=\frac{1}{r}\left\{\frac{1}{\lambda}\int_{r_1}^r\frac{s(1+h(s)^2)^2}{sh(s)-g(s)}\,ds-(n-1)\int_{r_1}^rh(s)^3\,ds-(n-2)\int_{r_1}^rh(s)\,ds\right\}+\frac{r_1}{r}b_0.
\end{aligned}\right.
\end{equation}
For any $0<\eta\le\ve_1$, let 
$$\cD_{\ve,\eta}':=\left\{ (g,h)\in \cX_\ve':  ||(g,h)-(a_0,b_0)||_{\cX_{\ve}'}\leq \eta\right\}.
$$
Note that $\cD_{\ve,\eta}'$  is a closed subspace of $\cX_\ve'$. We will show that if $\ve\in(0,\ve_2)$ is sufficiently  small where $\ve_2=\min (\ve_1,\eta/(M+1))$, the map $(g,h)\mapsto\Phi(g,h)$ will have  a unique  fixed point in $\cD_{\ve,\eta}'$.

We first  prove that $\Phi(\cD_{\ve,\eta}')\subset \cD_{\ve,\eta}'$ if $\ve\in (0,\ve_2)$ is sufficiently  small. Let $(g,h)\in \cD_{\ve,\eta}'$. Then 
\begin{equation*}
|h(s)-b_0|\le\eta\quad\mbox{ and } \quad|g(s)-a_0|\le \eta\quad\forall  r_1<s<r_1+\3.
\end{equation*}
Hence
\begin{equation}\label{h-g-bd4}
|h(s)|\le |b_0|+1\quad\mbox{ and } \quad |g(s)|\le |a_0|+1\quad\forall r_1< s< r_1+\3.
\end{equation}
Thus
\begin{align}\label{h-g-lower-bd5}
sh(s)-g(s)=&r_1b_0-a_0+(s-r_1)h(s)+r_1(h(s)-b_0)+(a_0-g(s))\notag\\
\ge&a_1-(1+|b_0|)\ve-r_1\eta-\eta\notag\\
\ge&a_1-\frac{a_1}{4}-\frac{a_1}{4}\notag\\
\ge&\frac{a_1}{2}>0\quad\forall  r_1\le s\le r_1+\3
\end{align}
and
\begin{equation}\label{phi1-contraction-map-5}
|\Phi_1(g,h)(r)-a_0|\le\int_{r_1}^r |h(s)|\,ds\le (1+|b_0|)\ve\le\eta\quad\forall  r_1\le r\le r_1+\3.
\end{equation}
Now by  \eqref{h-g-bd4} and \eqref{h-g-lower-bd5}, 
\begin{align}\label{phi2-contraction-map-5}
&\left|\Phi_2(g,h)(r)-b_0\right|\notag\\
\le&\left|\frac{1}{\lambda r_0}\int_{r_1}^r\frac{s(1+h(s)^2)^2}{sh(s)-g(s)}\,ds\right|+\frac{(n-1)}{r_0}\left|\int_{r_1}^rh(s)^3\,ds\right|+\frac{(n-2)}{r_0}\left|\int_{r_1}^rh(s)\,ds\right|+\frac{|r_1-r|}{|r|}|b_0|\notag\\
\le&\frac{\left(1+(1+|b_0|)^2)\right)^2}{a_1r_0\lambda}\left|r^2-r_1^2\right|+\frac{(n-1)(1+|b_0|)^3}{r_0}|r-r_1|+\frac{(n-2)(1+|b_0|)}{r_0}|r-r_1|+\frac{|r_1-r|}{r_0}|b_0|\notag\\
\le&a_2\ve\quad\forall  r_1\le r\le  r_1+\3
\end{align}
where
\begin{equation*}
a_2:=\frac{\left(1+(M+1)^2)\right)^2}{a_1r_0\lambda}(2r_0'+1)+\frac{(n-1)(M+1)^3}{r_0}+\frac{(n-2)(M+1)}{r_0}+\frac{M}{r_0}.
\end{equation*}
Let $\ve_3=\min (\ve_2,\eta/a_2)$ and $0<\3\le\ve_3$. Then by \eqref{phi2-contraction-map-5},
\begin{equation}\label{phi2-contraction-map-6}
\left|\Phi_2(g,h)(r)-b_0\right|\le\eta\quad \forall  r_1\le r\le  r_1+\3.
\end{equation}
By \eqref{phi1-contraction-map-5} and \eqref{phi2-contraction-map-6}, $\Phi(\cD_{\ve,\eta}')\subset \cD_{\ve,\eta}'$ for all $0<\3\le\ve_3$.

We now let $0<\ve\le\ve_3$. Let $(g_1,h_1),(g_2,h_2)\in \cD_{\ve,\eta}'$ and $\delta:=||(g_1,h_1)-(g_2,h_2)||_{\cX_\ve'}$. Then
\begin{equation}\label{h-g-diff50}
\left\{\begin{aligned}
&|h_1(s)-h_2(s)|\le\delta\quad\forall r_1<s<r_1+\3\\
&|g_1(s)-g_2(s)|\le\delta\quad\forall r_1<s<r_1+\3
\end{aligned}\right.
\end{equation}
and
\begin{equation*}
|h_i(s)-b_0|\le\eta\quad\mbox{ and } \quad|g_i(s)-a_0|\le \eta\quad\forall  r_1<s<r_1+\3, i=1,2
\end{equation*}
Hence
\begin{equation}
|h_i(s)|\le |b_0|+1\quad\mbox{ and } \quad |g_i(s)|\le |a_0|+1\quad\forall r_1< s< r_1+\3, i=1,2.\label{h-g-bd4'}
\end{equation}
Thus
\begin{equation}\label{phi1-contraction-map-10}
|\Phi_1(g_1,h_1)(r)-\Phi_1(g_2,h_2)(r)|\le\int_{r_1}^r |h_1(s)-h_2(s)|\,ds\le\ve\delta
\le\frac{\delta}{3}\quad \forall  r_1\le r\le  r_1+\3
\end{equation}
and
\begin{align}\label{phi2-contraction-map-10}
&|\Phi_2(g_1,h_1)(r)-\Phi_2(g_2,h_2)(r)|\notag\\
\le&\frac{1}{r_0\lambda}\int_{r_1}^r\left|\frac{(1+h_1(s)^2)^2}{sh_1(s)-g_1(s)}-\frac{(1+h_2(s)^2)^2}{sh_2(s)-g_2(s)}\right|s\,ds+\frac{(n-1)}{r_0}\int_{r_1}^r\left|h_1(s)^3-h_2(s)^3\right|\,ds\notag\\
&\qquad +\frac{(n-2)}{r_0}\int_{r_1}^r|h_1(s)-h_2(s)|\,ds\quad \forall  r_1\le r\le  r_1+\3.
\end{align}
Now by \eqref{h-g-lower-bd5},
\begin{equation}\label{hg12-diff-lower-bd20}
sh_i(s)-g_i(s)\ge\frac{a_1}{2}>0\quad\forall  r_1\le s\le r_1+\3,i=1,2.
\end{equation}
Hence by \eqref{hg12-diff-lower-bd20},
\begin{align}\label{h-g-ratio-difference}
&\left|\frac{(1+h_1(s)^2)^2}{sh_1(s)-g_1(s)}-\frac{(1+h_2(s)^2)^2}{sh_2(s)-g_2(s)}\right|\notag\\
\le &4\frac{\left|(1+h_1(s)^2)^2(sh_2(s)-g_2(s))-(1+h_2(s)^2)^2(sh_1(s)-g_1(s))\right|}{a_1^2}\quad\forall  r_1\le s\le r_1+\3.
\end{align}
By \eqref{h-g-diff50} and \eqref{h-g-bd4'},
\begin{align}\label{polynomial-difference10}
&\left|(1+h_1(s)^2)^2(sh_2(s)-g_2(s))-(1+h_2(s)^2)^2(sh_1(s)-g_1(s))\right|\notag\\
\le&\left|(1+h_1(s)^2)^2-(1+h_2(s)^2)^2\right||sh_2(s)-g_2(s)|+(1+h_2(s)^2)^2|sh_2(s)-g_2(s)-sh_1(s)+g_1(s)|\notag\\
\le&|h_1(s)-h_2(s)||h_1(s)+h_2(s)|\left|2+h_1(s)^2+h_2(s)^2\right|(|sh_2(s)|+|g_2(s)|)\notag\\
&\qquad+(1+h_2(s)^2)^2(s|h_2(s)-h_1(s)|+|g_2(s)-g_1(s)|)\notag\\
\le&a_3\delta\quad\forall r_1\le s\le r_1+\3
\end{align}
where
\begin{equation*}
a_3=8(M+1)^2(1+(M+1)^2)+2(1+(M+1)^2)^2.
\end{equation*}
Now let 
\begin{equation*}
\ve_4:=\min\left(\ve_3,\frac{a_1^2r_0\lambda}{18a_3(2r_0'+1)},\frac{r_0}{27(n-1)(1+M)^2}\right)
\end{equation*}
and let $0<\ve\le\ve_4$.
Then by \eqref{h-g-diff50}, \eqref{h-g-bd4'}, \eqref{h-g-ratio-difference} and \eqref{polynomial-difference10}, $\forall 0\le r\le\3$,
\begin{equation}\label{h-g-polyn-diff-integral-upper-bd}
\frac{1}{r_0\lambda}\int_{r_1}^r\left|\frac{(1+h_1(s)^2)^2}{sh_1(s)-g_1(s)}-\frac{(1+h_2(s)^2)^2}{sh_2(s)-g_2(s)}\right|s\,ds\le\frac{2a_3\delta}{a_1^2r_0\lambda}|r^2-r_1^2|\le\frac{2a_3(2r_0'+1)\ve}{a_1^2r_0\lambda}\delta\le\frac{\delta}{9},
\end{equation}
\begin{align}\label{h-cubic-diff-integral-upper-bd}
\frac{(n-1)}{r_0}\int_{r_1}^r\left|h_1(s)^3-h_2(s)^3\right|\,ds=&\frac{(n-1)}{r_0}\int_{r_1}^r|h_1(s)-h_2(s)|\left|h_1(s)^2+h_1(s)h_2(s)+h_2(s)^2\right|\,ds\notag\\
\le&\frac{3(n-1)(1+M)^2\ve}{r_0}\delta\le\frac{\delta}{9}\quad\forall r_1\le s\le r_1+\3,
\end{align}
and
\begin{equation}\label{h-diff-integral-upper-bd}
\frac{(n-2)}{r_0}\int_{r_1}^r|h_1(s)-h_2(s)|\,ds\le\frac{(n-2)\ve}{r_0}\delta\le\frac{\delta}{9}\quad\forall r_1\le s\le r_1+\3.
\end{equation}
By \eqref{phi2-contraction-map-10}, \eqref{h-g-polyn-diff-integral-upper-bd}, \eqref{h-cubic-diff-integral-upper-bd} and \eqref{h-diff-integral-upper-bd},
\begin{equation}\label{phi2-contraction-map-11}
|\Phi_2(g_1,h_1)(r)-\Phi_2(g_2,h_2)(r)|\le\frac{\delta}{3}\quad \forall  r_1\le r\le  r_1+\3.
\end{equation}
By \eqref{phi1-contraction-map-10} and \eqref{phi2-contraction-map-11},
\begin{equation*}
\|\Phi(g_1,h_1)-\Phi(g_2,h_2)\|_{\cX_\ve'}\le\frac{1}{3}\|(g_1,h_1)-(g_2,h_2)\|_{\cX_\ve'}\quad\forall (g_1,h_1),(g_2,h_2)\in \cD_{\ve,\eta}.
\end{equation*}
Hence $\Phi$ is a contraction map on $\cD_{\ve,\eta}'$. Then by the Banach fixed point theorem the map $\Phi$ has a unique fixed point. Let $(g,h)\in \cD_{\ve,\eta}'$ be the unique fixed point of the map $\Phi$. Then
\begin{equation*}
\Phi(g,h)=(g,h).
\end{equation*}
Hence 
\begin{equation*}
g(r)=a_0+\int_{r_1}^r h(s)\,ds\quad\forall r_1\le r\le r_1+\ve
\end{equation*}
which implies
\begin{equation}\label{g-eqn10}
g_r(r)=h(r)\quad\forall r_1\le r\le r_1+\ve\quad
\mbox{ and }\quad g(r_1)=a_0
\end{equation}
and $\forall r_1\le r\le r_1+\ve$, 
\begin{equation*}
h(r)=\frac{1}{r}\left\{\frac{1}{\lambda}\int_{r_1}^r\frac{s(1+h(s)^2)^2}{sh(s)-g(s)}\,ds-(n-1)\int_{r_1}^rh(s)^3\,ds-(n-2)\int_{r_1}^rh(s)\,ds\right\}+\frac{r_1}{r}b_0.
\end{equation*}
Thus
\begin{equation}\label{h-formula10}
\left\{\begin{aligned}
&rh(r)=\frac{1}{\lambda}\int_{r_1}^r\frac{s(1+h(s)^2)^2}{sh(s)-g(s)}\,ds-(n-1)\int_{r_1}^rh(s)^3\,ds-(n-2)\int_{r_1}^rh(s)\,ds+r_1b_0\\
&h(r_1)=b_0\end{aligned}\right.
\end{equation}
Differentiating \eqref{h-formula10} with respect to $r$,
\begin{equation}\label{h-eqn10}
rh_r(r)+h(r)=\frac{r(1+h(r)^2)^2}{\lambda(rh(r)-g(r))}-(n-1)h(r)^3-(n-2)h(r)\quad\forall r_1\le r\le r_1+\ve.
\end{equation}
By \eqref{h-g-lower-bd5}, \eqref{g-eqn10}, \eqref{h-formula10} and \eqref{h-eqn10}, $g\in C^2([r_1,r_1+\ve))$ satisfies \eqref{imcf-graph-ode-bdary-value-problem} and \eqref{f-structure-ineqn10} with $\delta_1=\ve$ and the lemma follows.
\end{proof}

\begin{lem}\label{f-monotone-lemma}
Let $n\ge 2$, $\lambda>0$, $\mu<0$ and $R_0>0$. Suppose $f\in C^1([0,R_0))\cap C^2(0,R_0)$ is the solution of \eqref{imcf-graph-ode-initial-value-problem2} which satisfies \eqref{f-structure-ineqn2}. Then 
\begin{equation}\label{f-rr}
\mbox{$\lim_{r\to 0}$} f_{rr}(r)=\frac{1}{n\lambda|\mu|}
\end{equation}
and
\begin{equation}\label{f-derivative-+ve}
f_r(r)=\frac{1}{\lambda h(r)}\int_0^r\frac{h(s)(1+f_r(s)^2)^2}{sf_r(s)-f(s)}\,ds>0\quad\forall 0<r<R_0
\end{equation}
where 
\begin{equation}\label{h-defn}
h(r)=r^{n-1}\mbox{exp}\left((n-1)\int_0^rs^{-1}f_r(s)^2\,ds\right)
\end{equation}
and there exists a constant $\delta_2>0$ such that
\begin{equation}\label{f-structure-ineqn5}
rf_r(r)-f(r)\ge\delta_2\quad\mbox{ in }[0,R_0).
\end{equation}
\end{lem}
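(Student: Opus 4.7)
The plan is to establish the three conclusions in order, each using the previous.

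For \eqref{f-rr}, I multiply the ODE in \eqref{imcf-graph-ode-initial-value-problem2} by $r^{n-1}$ to put it in divergence form:
\[
(r^{n-1}f_r)_r + (n-1)r^{n-2}f_r^3 = \frac{r^{n-1}(1+f_r^2)^2}{\lambda(rf_r-f)}.
\]
Integrating on $(0,r)$ (the boundary term at $0$ vanishes since $f_r(0)=0$) and dividing by $r^n$, I pass to the limit $r\to 0^+$. Since $f_r(s)\to 0$ and $f(s)\to\mu$, the denominator $sf_r(s)-f(s)\to|\mu|$, and $r^{-n}$ times the main integral on the right tends to $1/(n\lambda|\mu|)$. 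The correction $(n-1)r^{-n}\int_0^r s^{n-2}f_r(s)^3\,ds$ is $o(1)$ because $f_r(s)\to 0$ is enough to absorb the $r^{-n}$ scaling. This yields $\lim_{r\to 0^+}f_r(r)/r=1/(n\lambda|\mu|)$, and substituting back into the ODE gives the desired limit for $f_{rr}$.

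For \eqref{f-derivative-+ve}, logarithmic differentiation of $h$ in \eqref{h-defn} gives $h_r/h=(n-1)(1+f_r^2)/r$; the integral defining $h$ converges thanks to $f_r(s)=O(s)$ from the previous step, so $h(s)\sim s^{n-1}$ as $s\to 0$. Multiplying the ODE by $h$ and collecting terms yields $(hf_r)_r = h(1+f_r^2)^2/(\lambda(rf_r-f))$. Since $h(s)f_r(s)=O(s^n)\to 0$ as $s\to 0$, integrating from $0$ to $r$ and dividing by $h(r)>0$ produces the stated integral formula. Strict positivity of $f_r$ on $(0,R_0)$ then follows from strict positivity of the integrand, using the hypothesis $sf_r(s)-f(s)>0$.

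For \eqref{f-structure-ineqn5}, I claim $f_{rr}(r)>0$ throughout $[0,R_0)$; the bound then follows easily. From \eqref{f-rr}, $f_{rr}$ extends continuously to $0$ with $f_{rr}(0)=1/(n\lambda|\mu|)>0$. Suppose for contradiction that $f_{rr}$ vanishes somewhere and let $r_1\in(0,R_0)$ be the smallest such point; then $f_{rr}>0$ on $[0,r_1)$, forcing $f_{rrr}(r_1)\le 0$. Differentiating the ODE with respect to $r$ and evaluating at $r_1$—where every term containing the factor $f_{rr}(r_1)$ or $P'(r_1)=r_1 f_{rr}(r_1)=0$ drops out—gives
\[
f_{rrr}(r_1)=\frac{(n-1)\bigl(1+f_r(r_1)^2\bigr)f_r(r_1)}{r_1^2}>0,
\]
the strict inequality coming from $f_r(r_1)>0$ established in step 2, a contradiction. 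Hence $f_{rr}>0$ on $[0,R_0)$, so $P(r):=rf_r(r)-f(r)$ satisfies $P'(r)=rf_{rr}(r)>0$ for $r>0$; $P$ is therefore strictly increasing on $[0,R_0)$ with $P(r)\ge P(0)=|\mu|$, and the conclusion holds with $\delta_2=|\mu|$.

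The main obstacle is the contradiction in the last step: one must carefully verify the cancellations at $r_1$ (both $f_{rr}(r_1)=0$ and the consequent $P'(r_1)=0$ kill many terms upon differentiating the ODE) and check that the single surviving term inherits the positive sign from $f_r(r_1)>0$. This is what links the positivity of $f_r$ from step 2 to the convexity of $f$ in step 3 and yields the uniform lower bound on $P$.
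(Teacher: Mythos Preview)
Your steps 1 and 2 track the paper closely (the paper uses the integral identity \eqref{f'-eqn} from Lemma~\ref{local-existence-lem} together with l'Hospital's rule, while you use the $r^{n-1}$ integrating factor directly, but the mechanism is the same). One point to tighten in step 1: the assertion that ``$f_r(s)\to 0$ is enough'' to make the correction $(n-1)r^{-n}\int_0^r s^{n-2}f_r(s)^3\,ds$ vanish is not correct as stated, since $r^{-n}\int_0^r s^{n-2}\,ds=\frac{1}{(n-1)r}\to\infty$; you actually need $f_r(s)=O(s)$ near $0$. This follows either from the a~priori bound \eqref{phi2-bd} obtained in the fixed-point construction (which is what the paper invokes, as \eqref{f'-bd1}), or by a short bootstrap on your own identity: it gives $|f_r(r)|\le Cr+\bigl(\sup_{[0,r]}|f_r|\bigr)^3$, and since $\sup_{[0,r]}|f_r|\to 0$ this forces $\sup_{[0,r]}|f_r|\le 2Cr$ for small $r$.

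For \eqref{f-structure-ineqn5} you take a genuinely different route. The paper argues directly on $w(r)=rf_r(r)-f(r)$: it computes $w_r$ via \eqref{w-derivative-eqn}, splits into the cases $\lim_{r\to R_0}f(r)\in(0,\infty]$ and $\le 0$, and in each case shows by a barrier/contradiction argument that $w$ cannot dip below an explicit threshold $a_4$ built from $R_0$, $\lambda$, and $\min_{[0,r_1]}w$. Your argument is shorter and sharper: you prove convexity $f_{rr}>0$ on $(0,R_0)$ (this is exactly the content of the paper's Lemma~\ref{f''-positive-lemma}, proved there \emph{after} the present lemma but using only your steps 1 and 2, so no circularity arises), and then $P'(r)=rf_{rr}(r)>0$ gives $P(r)\ge P(0)=|\mu|$ at once, with the explicit constant $\delta_2=|\mu|$. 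The paper's argument has the modest advantage of staying at the first-derivative level of $w$ and keeping the convexity lemma separate; your route buys brevity and the optimal constant, at the price of importing the third-derivative contradiction argument into this proof.
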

\begin{proof}
Let $H(r)$ and $E(r)$ be given by \eqref{big-h-defn} and \eqref{e-defn}. In order to prove \eqref{f-rr} we first observe that by the proof of Lemma \ref{local-existence-lem} and \eqref{phi2-bd}, \eqref{f'-eqn} holds and there exist constants $0<R_1<R_0$ and $C_1>0$ such that
\begin{equation}\label{f'-bd1}
\frac{|f_r(r)|}{r}\le C_1\quad\forall 0<r<R_1.
\end{equation}
By \eqref{f'-bd1} the function $h$ given by \eqref{h-defn} is well-defined.
Multiplying \eqref{imcf-graph-ode-initial-value-problem2} by $h$ and integrating over $(0,r)$, \eqref{f-derivative-+ve} follows.
Let $\{r_k\}_{k=1}^{\infty}\subset (0,R_1)$ be a sequence such that $r_k\to 0$ as $k\to\infty$. By \eqref{f-derivative-+ve} and \eqref{f'-bd1} the sequence $\{r_k\}_{k=1}^{\infty}$ has a sequence which we may assume without loss of generality to be the sequence itself such that $f_r(r_k)/r_k$ converges to some point $a_0\in [0,C_1]$ as $k\to\infty$. Then by \eqref{imcf-graph-ode-initial-value-problem2}, \eqref{f'-eqn} and the l'Hospital rule,
\begin{align*}
a_0=&\mbox{$\lim_{k\to\infty}$}\frac{f_r(r_k)}{r_k}
=\mbox{$\lim_{k\to\infty}$}\frac{E(r_k)-(n-2)H(r_k)}{r_k^2}
=\mbox{$\lim_{k\to\infty}$}\frac{E_r(r_k)-(n-2)H_r(r_k)}{2r_k}\notag\\
=&\frac{1}{2}\mbox{$\lim_{k\to\infty}$}\frac{\frac{1}{\lambda}\frac{r_k(1+f_r(r_k)^2)^2}{r_kf_r(r_k)-f(r_k)}-(n-1)f_r(r_k)^3-(n-2)f_r(r_k)}{r_k}\notag\\
=&\frac{1}{2}\left(\frac{1}{\lambda|\mu|}-(n-2)a_0\right)
\end{align*} 
which implies that
\begin{equation*}
a_0=\frac{1}{n\lambda|\mu|}.
\end{equation*}
Since the sequence $\{r_k\}_{k=1}^{\infty}$ is arbitrary, 
\begin{equation}\label{f'/r-limit}
\mbox{$\lim_{r\to 0}$}\frac{f_r(r)}{r}=\frac{1}{n\lambda|\mu|}.
\end{equation}
Letting $r\to 0$ in \eqref{imcf-graph-ode-initial-value-problem2}, by \eqref{f'/r-limit} we get 
\begin{equation*}
\mbox{$\lim_{r\to 0}f_{rr}(r)$}+\frac{n-1}{n\lambda|\mu|}-\frac{1}{\lambda|\mu|}=0
\end{equation*}
and \eqref{f-rr} follows.

What is left to show is \eqref{f-structure-ineqn5}. Let $w(r)=rf_r(r)-f(r)$. By \eqref{imcf-graph-ode-initial-value-problem2} and a direct computation $w$ satisfies
\begin{equation}\label{w-derivative-eqn}
w_r(r)=r(1+f_r(r)^2)\left(\frac{1+f_r(r)^2}{\lambda w(r)}-\frac{(n-1)}{r^2}(w(r)+f(r))\right)\quad\forall 0<r<R_0.
\end{equation}
By \eqref{f-derivative-+ve}, 
\begin{equation}\label{a2-defn}
\mbox{$a_2:=\lim_{r\to R_0}f(r)\in (\mu,\infty]$}
\end{equation}
exists. We now divide the proof into 2 cases.

\noindent\textbf{Case 1}: $a_2\in (0,\infty]$

\noindent By \eqref{f-derivative-+ve} there exists  $r_1\in (R_0/2,R_0)$ such that 
\begin{equation}\label{f-lower-upper-bd}
f(r)>\min\left(\frac{a_2}{2},R_0\sqrt{(n-1)\lambda}\right)\quad\forall r_1<r<R_0.
\end{equation}
Let 
\begin{equation}\label{a4-defn}
a_3=\min_{0\le r\le r_1}w(r)
\end{equation}
and
\begin{equation}\label{a5-defn-2}
a_4=\min\left(\frac{a_2}{8(n-1)\lambda},\frac{a_3}{2},\frac{R_0}{4\sqrt{(n-1)\lambda}}\right).
\end{equation}
Then $a_3>0$ and $a_4>0$.
Suppose there exists $r_2\in (r_1,R_0)$ such that $w(r_2)<a_4$. Let $(a,b)\in (0, R_0)$ be the maximal interval containing $r_2$ such that 
\begin{equation}\label{w-upper-bd}
w(r)<a_4\quad\forall a<r<b
\end{equation} 
holds. Since $w(r_1)\ge a_3>a_4$, $a>r_1$ and $w(a)=a_4$.  By \eqref{f-lower-upper-bd}, \eqref{a5-defn-2} and \eqref{w-upper-bd}, we get
\begin{equation}\label{w-upper-bd-f-w-ineqn}
w(r)<\frac{R_0}{4\sqrt{(n-1)\lambda}}\quad\mbox{ and }\quad f(r)>4(n-1)\lambda w(r)\quad\forall a<r<b.
\end{equation}
Hence by \eqref{f-structure-ineqn2}, \eqref{f-lower-upper-bd} and \eqref{w-upper-bd-f-w-ineqn}, for any $a<r<b$ the right hand side of \eqref{w-derivative-eqn} is bounded below by
\begin{align*}
\ge&r(1+f_r(r)^2)\left(\frac{1+(f(r)/r)^2}{\lambda w(r)}-\frac{(n-1)}{r^2}(w(r)+f(r))\right)\notag\\
\ge&r(1+f_r(r)^2)\left(\frac{1+(f(r)/R_0)^2}{\lambda w(r)}-\frac{4(n-1)}{R_0^2}(w(r)+f(r))\right)\notag\\
\ge&r(1+f_r(r)^2)\left(\frac{1}{4\lambda w(r)}\left(1-\frac{16(n-1)\lambda}{R_0^2}w(r)^2\right)+
\frac{3}{4\lambda w(r)}+\frac{f(r)}{\lambda R_0^2w(r)}\left(f(r)-4(n-1)\lambda w(r)\right)\right)\\
\ge&\frac{3r_1}{4\lambda w(r)}.
\end{align*} 
Hence
\begin{align}\label{w-lower-bd5}
&w_r(r)\ge\frac{3r_1}{4\lambda w(r)}\quad\forall a<r<b\notag\\
\Rightarrow\quad&w(r)>w(a)=a_4\quad\forall a<r<b
\end{align}
which contradicts  \eqref{w-upper-bd}. Thus no such $r_2$ exists and $w(r)\ge a_4$ for all $r_1\le r<R_0$ and \eqref{f-structure-ineqn5} holds with $\delta_2=a_4$.

\noindent\textbf{Case 2}: $a_2\le 0$

Choose $r_1\in (R_0/2,R_0)$. Let $a_3$ be given by \eqref{a4-defn} and 
\begin{equation}\label{a5-defn-3}
a_4=\min\left(\frac{R_0}{4\sqrt{(n-1)\lambda}},\frac{a_3}{2}\right).
\end{equation}
Then $a_3>0$ and $a_4>0$.
Suppose there exists $r_2\in (r_1,R_0)$ such that $w(r_2)<a_4$. Let $(a,b)\in (0, R_0)$ be the maximal interval containing $r_2$ such that \eqref{w-upper-bd} holds. Then $a>r_1$ and $w(a)=a_4$. By \eqref{f-derivative-+ve}, $f(r)<0$ for all $0<r<R_0$. Hence by \eqref{w-upper-bd}, for any $a<r<b$ the right hand side of \eqref{w-derivative-eqn} is bounded below by
\begin{align*}\label{w_r-lower-bd4}
\ge&r(1+f_r(r)^2)\left(\frac{1}{\lambda w(r)}-\frac{4(n-1)w(r)}{R_0^2}\right)\notag\\
\ge&r(1+f_r(r)^2)\left(\frac{1}{4\lambda w(r)}\left(1-\frac{16(n-1)\lambda}{R_0^2}w(r)^2\right)+
\frac{3}{4\lambda w(r)}\right)\notag\\
\ge&\frac{3r_1}{4\lambda w(r)}.
\end{align*} 
Thus \eqref{w-lower-bd5} holds which contradicts  \eqref{w-upper-bd}. Hence no such $r_2$ exists and $w(r)\ge a_4$ for all $r_1\le r<R_0$ and \eqref{f-structure-ineqn5} holds with $\delta_2=a_4$ and the lemma follows.
\end{proof}

\begin{lem}\label{f''-positive-lemma}
Let $n\ge 2$, $\lambda>0$, $\mu<0$ and $R_0>0$. Suppose $f\in C^1([0,R_0))\cap C^2(0,R_0)$ is the solution of \eqref{imcf-graph-ode-initial-value-problem2} which satisfies \eqref{f-structure-ineqn2}. Then 
\begin{equation}\label{f''-positive}
f_{rr}(r)>0\quad\forall 0<r<R_0.
\end{equation}
\end{lem}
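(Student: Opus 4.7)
The plan is a proof by contradiction, using the already-known positivity of $f_{rr}$ at $r=0$ together with the ODE to rule out a first zero.

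First I would record two preliminary observations. From Lemma \ref{f-monotone-lemma} we have $\lim_{r\to 0^+}f_{rr}(r)=\frac{1}{n\lambda|\mu|}>0$, so $f_{rr}>0$ on some interval $(0,\delta)$; also $f_r(r)>0$ on $(0,R_0)$ and $rf_r(r)-f(r)\ge\delta_2>0$ on $[0,R_0)$. Second, rewriting \eqref{imcf-graph-ode-initial-value-problem2} as
\begin{equation*}
f_{rr}(r)=\frac{(1+f_r(r)^2)^2}{\lambda(rf_r(r)-f(r))}-\frac{(n-1)(1+f_r(r)^2)f_r(r)}{r},
\end{equation*}
the right-hand side is $C^1$ in $r$ (since $rf_r-f>0$ and $f\in C^1\cap C^2$), so in fact $f\in C^3(0,R_0)$ and we are allowed to differentiate.

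Next, assume toward a contradiction that the set $\{r\in(0,R_0):f_{rr}(r)=0\}$ is nonempty and let $r_0$ be its infimum. By the continuity of $f_{rr}$ and the observation that $f_{rr}>0$ near $0$, we have $r_0\in(0,R_0)$, $f_{rr}(r_0)=0$, and $f_{rr}(r)>0$ on $(0,r_0)$. The one-sided difference quotient from the left then gives
\begin{equation*}
f_{rrr}(r_0)=\lim_{h\to 0^-}\frac{f_{rr}(r_0+h)}{h}\le 0.
\end{equation*}

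The heart of the argument is then a direct computation of $f_{rrr}(r_0)$ by differentiating the displayed formula for $f_{rr}$. Every term produced by that differentiation that carries an explicit factor $f_{rr}$ vanishes at $r_0$, so only the derivative of $-\frac{(n-1)(1+f_r^2)f_r}{r}$ coming from differentiating the $\frac{1}{r}$ survives, yielding
\begin{equation*}
f_{rrr}(r_0)=\frac{(n-1)(1+f_r(r_0)^2)f_r(r_0)}{r_0^2}>0,
\end{equation*}
since $f_r(r_0)>0$ by Lemma \ref{f-monotone-lemma}. This contradicts $f_{rrr}(r_0)\le 0$, so no such $r_0$ exists and \eqref{f''-positive} holds.

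The only non-routine step is verifying that, after differentiating the ODE, the terms in $f_{rrr}(r_0)$ not carrying a factor of $f_{rr}$ combine to a strictly positive quantity; this is where the sign $f_r>0$ from Lemma \ref{f-monotone-lemma} is used crucially. Everything else is continuity plus a one-sided derivative sign argument.
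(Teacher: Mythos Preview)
Your argument is correct and follows essentially the same approach as the paper: both use Lemma~\ref{f-monotone-lemma} to get $f_{rr}>0$ near the origin, take the first zero $r_0$ of $f_{rr}$, note that $f_{rrr}(r_0)\le 0$ there, and then differentiate the ODE to compute $f_{rrr}(r_0)=\frac{n-1}{r_0^2}(f_r(r_0)+f_r(r_0)^3)>0$, a contradiction. Your added remark that the ODE forces $f\in C^3(0,R_0)$, justifying the differentiation, is a nice bit of care the paper leaves implicit.
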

\begin{proof}
By \eqref{f-rr} there exists a constant $0<R_1<R_0$ such that
\begin{equation}\label{f''-local-positive}
f_{rr}(r)>0\quad\forall 0<r<R_1.
\end{equation}
Let $R_2=\max\{R\in (0,R_0):f_{rr}(r)>0\quad\forall 0<r<R\}$. Then $R_1\le R_2\le R_0$. Suppose $R_2<R_0$. Then 
\begin{equation}\label{f''-sign-eqn}
f_{rr}(R_2)=0, \qquad f_{rr}(r)>0\quad\forall 0<r<R_2\quad\mbox{ and }\quad f_{rrr}(R_2)\le 0.
\end{equation}
On the other hand by differentiating \eqref{imcf-graph-ode-initial-value-problem2} with respect to $r$ and putting $r=R_2$ we have
\begin{align*}
f_{rrr}(R_2)=&\frac{n-1}{R_2^2}(f_r(R_2)+f_r(R_2)^3)-\frac{n-1}{R_2}(f_{rr}(R_2)+3f_r(R_2)^2f_{rr}(R_2))\notag\\
&\qquad+\frac{1}{\lambda}\left\{\frac{4(1+f_r(R_2)^2)f_r(R_2)f_{rr}(R_2)}{R_2f_r(R_2)-f(R_2)}-\frac{R_2(1+f_r(R_2)^2)^2f_{rr}(R_2))}{(R_2f_r(R_2)-f(R_2))^2}\right\}\notag\\
=&\frac{n-1}{R_2^2}(f_r(R_2)+f_r(R_2)^3)\notag\\
>&0
\end{align*}
which contradicts \eqref{f''-sign-eqn}. Hence $R_2=R_0$ and the lemma follows.
\end{proof}

\begin{lem}\label{f-derivative-sequence-bd-lemma}
Let $n\ge 2$, $\lambda>\frac{1}{n-1}$, $\mu<0$ and $R_0>0$. Suppose $f\in C^1([0,R_0))\cap C^2(0,R_0)$ is the solution of \eqref{imcf-graph-ode-initial-value-problem2} which satisfies \eqref{f-structure-ineqn2}. Then there exists a constant $M_1>0$  such that
\begin{equation}\label{f-derivative-locally-finite10}
0\le f_r(r)\le M_1\quad\forall 0\le r<R_0.
\end{equation}
\end{lem}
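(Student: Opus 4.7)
The strategy is to convert the strict convexity $f_{rr}>0$ (Lemma~\ref{f''-positive-lemma}) into a pointwise quadratic inequality for $f_r$ in terms of $f$, use a Gronwall/integrating-factor argument to bound $f$, and then close the loop to bound $f_r$. The lower bound $f_r\ge 0$ is already furnished by Lemma~\ref{f-monotone-lemma}.

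Multiplying \eqref{imcf-graph-ode-initial-value-problem2} by $\lambda r(rf_r-f)/(1+f_r^2)>0$ and rearranging, the condition $f_{rr}>0$ is equivalent to
\begin{equation*}
\alpha r f_r^2-\beta f f_r-r<0\quad\mbox{on }(0,R_0),
\end{equation*}
where $\alpha:=\lambda(n-1)-1$ and $\beta:=\lambda(n-1)=\alpha+1$; the assumption $\lambda>\frac{1}{n-1}$ is precisely what makes $\alpha>0$. Solving this quadratic for $f_r>0$ and using $\sqrt{x^2+y^2}\le x+y$ for $x,y\ge 0$ yields the key pointwise bound
\begin{equation*}
f_r(r)<\frac{\beta\max(f(r),0)}{\alpha r}+\frac{1}{\sqrt{\alpha}}\qquad\forall r\in(0,R_0).\qquad (\ast)
\end{equation*}

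If $f\le 0$ throughout $[0,R_0)$, then $(\ast)$ immediately gives $f_r<1/\sqrt{\alpha}$ and the lemma holds. Otherwise set $r_*:=\inf\{r\in(0,R_0):f(r)\ge 0\}$; by continuity $f(r_*)=0$, and since $f\le 0$ on $[0,r_*]$, $(\ast)$ there reduces to $f_r<1/\sqrt{\alpha}$, so integration from $0$ gives $r_*\ge|\mu|\sqrt{\alpha}>0$. On $(r_*,R_0)$ we have $f>0$, so $(\ast)$ becomes the linear differential inequality $f_r-(\beta/(\alpha r))f<1/\sqrt{\alpha}$. Multiplying by the integrating factor $r^{-\beta/\alpha}$ and integrating from $r_*$, where $f$ vanishes, the algebraic identity $\beta/\alpha-1/\alpha=1$ produces
\begin{equation*}
f(r)<\sqrt{\alpha}\,\Bigl(\frac{r^{\beta/\alpha}}{r_*^{1/\alpha}}-r\Bigr)\le\sqrt{\alpha}\,\frac{R_0^{\beta/\alpha}}{r_*^{1/\alpha}}=:L\qquad\forall r\in[r_*,R_0).
\end{equation*}
Feeding $f\le L$ and $r\ge r_*$ back into $(\ast)$ on $(r_*,R_0)$ yields $f_r<\beta L/(\alpha r_*)+1/\sqrt{\alpha}$, which combined with $f_r<1/\sqrt{\alpha}$ on $[0,r_*]$ produces the desired $M_1$.

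The main obstacle is the circular flavor of $(\ast)$: its right-hand side involves $f$ while only the weak estimate $f\le rf_r$ is a priori available, so a naive iteration loops. The Gronwall step sidesteps this by exploiting the specific linear structure of $(\ast)$ on $\{f>0\}$ together with the quantitative separation $r_*\ge|\mu|\sqrt{\alpha}>0$, which keeps the integration away from the singularity of $r^{-\beta/\alpha}$ at $r=0$. Implicitly $M_1$ depends on $R_0$, as it must: the asymptotic $rf_r/f\to\beta/\alpha$ claimed in Theorem~\ref{asymptotic-behaviour-time-infty-thm} prevents any global-in-$R_0$ bound.
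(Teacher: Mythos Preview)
Your proof is correct and genuinely cleaner than the paper's. Both arguments rest on the same two ingredients --- the strict convexity $f_{rr}>0$ from Lemma~\ref{f''-positive-lemma} and a Gronwall-type step controlling $f$ in terms of $f_r$ --- but they deploy them differently. The paper argues by contradiction: assuming $f_r\to\infty$ as $r\to R_0$, it first forces $f\to\infty$, then uses a sequence argument (if $f_r(r_k)/f(r_k)\to\infty$ then $f_{rr}(r_k)<0$ eventually, contradicting convexity) to obtain $f_r\le M_2 f$ near $R_0$, and only then integrates to bound $f$ and close the contradiction. You instead extract the sharp algebraic content of $f_{rr}>0$ in one stroke, namely the quadratic inequality $\alpha r f_r^2-\beta f f_r-r<0$, and solve it pointwise to get $(\ast)$ directly. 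This avoids all sequence arguments and the nested contradiction, and yields an explicit, constructive $M_1$ in terms of $n,\lambda,\mu,R_0$; the paper's bound is implicit. The price is that your argument uses $\lambda>\frac{1}{n-1}$ (i.e.\ $\alpha>0$) at the very first step, whereas the paper's route isolates that hypothesis only at the moment it computes the sign of $\frac{1}{\lambda}-(n-1)$ --- but since the lemma assumes $\lambda>\frac{1}{n-1}$ anyway, this costs nothing. One stylistic nit: ``equivalent to'' is accurate since the multiplier is strictly positive, but only the forward implication is used.
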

\begin{proof}
Let $a_2$ be given by \eqref{a2-defn}. By Lemma \ref{f''-positive-lemma},
$a_3:=\lim_{r\to R_0}f_r(r)\in (0,\infty]$
exists. Suppose $a_3=\infty$.  We then claim that $a_2=\infty$. Suppose not. Then $a_2<\infty$ and $\mu<f(r)\le a_2$ for all $0<r<R_0$. By \eqref{imcf-graph-ode-initial-value-problem2},
\begin{align}\label{f''-f'3-ration-limit100}
\mbox{$\lim_{r\to\infty}$}\frac{f_{rr}(r)}{(1+f_r(r)^2)f_r(r)}=&\mbox{$\lim_{r\to\infty}$}\left(\frac{1}{\lambda}\cdot\frac{1+f_r(r)^2}{(rf_r(r)-f(r))f_r(r)}-\frac{n-1}{r}\right)\notag\\
=&\frac{1}{\lambda}\mbox{$\lim_{r\to\infty}$}\frac{f_r(r)^{-2}+1}{(r-(f(r)/f_r(r)))}-\frac{n-1}{R_0}\notag\\
=&\frac{1}{R_0}\left(\frac{1}{\lambda}-(n-1)\right)<0.
\end{align}
By \eqref{f''-f'3-ration-limit100} there exists $R_1\in (0,R_0)$ such that
\begin{equation*}
\frac{f_{rr}(r)}{(1+f_r(r)^2)f_r(r)}<0\quad\forall R_1\le r<R_0\quad\Rightarrow\quad 
f_{rr}(r)<0\quad\forall R_1\le r<R_0
\end{equation*}
which contradicts \eqref{f''-positive}. Hence $a_2=\infty$ and we can choose a constant $0<R_2<R_0$ such that $f(r)>0$ for any $R_2\le r<R_0$. 
We claim that there exists a constant $M_2>0$ such that
\begin{equation}\label{f'-f-ratio-bd}
f_r(r)\le M_2f(r)\quad\forall R_2\le r<R_0.
\end{equation}
Suppose \eqref{f'-f-ratio-bd} does not hold for any $M_2>0$. Then  there exists a sequence $\{r_k\}_{k=1}^{\infty}\subset (R_2, R_0)$, $r_k\to R_0$ as $k\to\infty$, such that
\begin{equation}\label{f'-f-ratio-limit-infty}
\mbox{$\lim_{r\to R_0}$}\frac{f_r(r_k)}{f(r_k)}=\infty.
\end{equation}
By \eqref{imcf-graph-ode-initial-value-problem2} and \eqref{f'-f-ratio-limit-infty},
\begin{align}\label{f''-f'3-ration-limit}
\mbox{$\lim_{k\to\infty}$}\frac{f_{rr}(r_k)}{(1+f_r(r_k)^2)f_r(r_k)}=&\mbox{$\lim_{k\to\infty}$}\left(\frac{1}{\lambda}\cdot\frac{1+f_r(r_k)^2}{(r_kf_r(r_k)-f(r_k))f_r(r_k)}-\frac{n-1}{r_k}\right)\notag\\
=&\frac{1}{\lambda}\mbox{$\lim_{k\to\infty}$}\frac{f_r(r_k)^{-2}+1}{(r_k-(f(r_k)/f_r(r_k)))}-\frac{n-1}{R_0}\notag\\
=&\frac{1}{R_0}\left(\frac{1}{\lambda}-(n-1)\right)<0.
\end{align}
By \eqref{f''-f'3-ration-limit} there exists $k_0\in\Z^+$ such that
\begin{equation*}
\frac{f_{rr}(r_k)}{(1+f_r(r_k)^2)f_r(r_k)}<0\quad\forall k\ge k_0\quad\Rightarrow\quad 
f_{rr}(r_k)<0\quad\forall k\ge k_0
\end{equation*}
which contradicts \eqref{f''-positive}. Hence there exists a constant $M_2>0$ such that \eqref{f'-f-ratio-bd} holds. Integrating \eqref{f'-f-ratio-bd} over $(R_2,R_0)$,
\begin{equation}\label{f-upper-bd11}
f(r)\le e^{M_2R_0}f(R_2)\quad\forall R_2\le r<R_0.
\end{equation}
By \eqref{f'-f-ratio-bd} and \eqref{f-upper-bd11},
\begin{equation*}
f_r(r)\le M_2e^{M_2R_0}f(R_2)\quad\forall R_2\le r<R_0
\end{equation*}
which contradicts the assumption that $a_3=\infty$. Hence $a_3<\infty$ and \eqref{f-derivative-locally-finite10} holds with $M_1=a_3$ and the lemma follows.
\end{proof}

We are now ready for the proof of Theorem \ref{existence_soln-thm}.

\noindent{\bf Proof of Theorem \ref{existence_soln-thm}}:
Since uniqueness of solution of  \eqref{imcf-graph-ode-initial-value-problem} follows by standard ODE theory. We only need to prove existence of solution of \eqref{imcf-graph-ode-initial-value-problem}.
By lemma \ref{local-existence-lem} there exists a constant $R_1>0$ such that the equation 
\eqref{imcf-graph-ode-initial-value-problem2} has a unique solution $f\in C^1([0,R_1))\cap C^2(0,R_1)$ which satisfies \eqref{f-structure-ineqn2} in $(0,R_1)$. Let $(0,R_0)$, $R_0\ge R_1$, be the maximal interval of existence of solution $f\in C^1([0,R_0))\cap C^2(0,R_0)$ of \eqref{imcf-graph-ode-initial-value-problem2} which satisfies \eqref{f-structure-ineqn2}.  

Suppose $R_0<\infty$.
By Lemma \ref{f-derivative-sequence-bd-lemma} there exists a constant $M_1>0$  such that \eqref{f-derivative-locally-finite10} holds.
By Lemma \ref{f-monotone-lemma} there exists a constant $\delta_2>0$ such that
\eqref{f-structure-ineqn5} holds. By \eqref{f-structure-ineqn2}, \eqref{f-derivative-+ve} and \eqref{f-derivative-locally-finite10},
\begin{equation}\label{f-rk-bd}
\mu<f(r)\le R_0M_1\quad\forall 0<r<R_0.
\end{equation}
By \eqref{f-derivative-+ve}, \eqref{f-structure-ineqn5}, \eqref{f-derivative-locally-finite10}, \eqref{f-rk-bd} and Lemma \ref{local-existence-extension-lem}, there exists a constant $\delta_1>0$ such that for any $r_1\in (R_0/2,R_0)$, there exists a unique solution $f_1\in C^2([r_1,r_1+\delta_1))$ of 
\eqref{imcf-graph-ode-bdary-value-problem} which satisfies
\eqref{f-structure-ineqn10} in $(r_1,r_1+\delta_1)$ with $a_0=f(r_1)$ and $b_0=f_r(r_1)$. We now choose $r_1\in (R_0/2,R_0)$ such that $R_0-r_1<\delta_1/2$. We extend $f$ to a function on $[0,r_1+\delta_1)$ by setting $f(r)=f_1(r)$ for all $r\in (r_1,r_1+\delta_1)$. Then $f$ is a solution of \eqref{imcf-graph-ode-initial-value-problem}  in $[0,r_1+\delta_1)$ which satisfies \eqref{f-structure-ineqn2} in $[0,r_1+\delta_1)$. Since $r_1+\delta_1>R_0$, this contradicts the choice of $R_0$. Hence $R_0=\infty$. By Lemma \ref{f-monotone-lemma}, \eqref{fr-+ve} holds and the theorem follows.

{\hfill$\square$\vspace{6pt}}

\section{Asymptotic behaviour of solution}
\setcounter{equation}{0}
\setcounter{thm}{0} 

In this section we will prove Theorem \ref{asymptotic-behaviour-time-infty-thm}. 
We first observe that by Lemma \ref{f''-positive-lemma} we have the following result.

\begin{cor}\label{f-to-infty-cor}
Let $n\ge 2$, $\lambda>\frac{1}{n-1}$, $\mu<0$ and $f$ be the unique solution of \eqref{imcf-graph-ode-initial-value-problem}  which satisfies \eqref{f-structure-ineqn}.
Then
\begin{equation}\label{f''-positive-10}
f_{rr}(r)>0\quad\forall r>0
\end{equation}
and 
\begin{equation}\label{f-tends-to-infty}
\mbox{$\lim_{r\to\infty}$}f(r)=\infty.
\end{equation}
\end{cor}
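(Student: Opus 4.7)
The first claim \eqref{f''-positive-10} is essentially a restatement of Lemma \ref{f''-positive-lemma} once we know the solution is global. By Theorem \ref{existence_soln-thm}, the function $f$ belongs to $C^1([0,\infty))\cap C^2(0,\infty)$ and satisfies \eqref{imcf-graph-ode-initial-value-problem2} together with \eqref{f-structure-ineqn2} on every interval $(0,R)$ with $R>0$. Thus the hypotheses of Lemma \ref{f''-positive-lemma} hold with $R_0=R$ for arbitrary $R>0$, and we conclude that $f_{rr}(r)>0$ for all $r>0$. So the plan for the first part is simply to invoke Lemma \ref{f''-positive-lemma}.

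For the divergence \eqref{f-tends-to-infty} I propose to exploit the strict convexity just obtained. Fix any $r_0>0$. Theorem \ref{existence_soln-thm} (specifically \eqref{fr-+ve}) gives $f_r(r_0)>0$, and by \eqref{f''-positive-10} the derivative $f_r$ is strictly increasing on $(0,\infty)$. Consequently
\[
f_r(r)\ge f_r(r_0)>0\qquad\forall\, r\ge r_0,
\]
and integrating from $r_0$ to $r$ yields
\[
f(r)\ge f(r_0)+f_r(r_0)(r-r_0)\longrightarrow\infty\quad\text{as }r\to\infty,
\]
which is \eqref{f-tends-to-infty}.

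I do not expect any genuine obstacle here: both parts are immediate corollaries of results already in hand. The only thing to double-check is that the convexity lemma of Section 2 is phrased for an arbitrary $R_0$, so that letting $R_0\to\infty$ is allowed; this is the case since Lemma \ref{f''-positive-lemma} is proved by a maximum-principle/continuation argument internal to each interval, with no global-in-$r$ dependence in its constants. Given that, the corollary is really just the observation that a strictly convex function on $[0,\infty)$ whose derivative vanishes only at the origin must grow to infinity.
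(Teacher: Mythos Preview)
Your proposal is correct and matches the paper's approach: the paper states the corollary as an immediate consequence of Lemma \ref{f''-positive-lemma} without further argument, and your write-up simply makes explicit the (obvious) linear-lower-bound step for \eqref{f-tends-to-infty} that the paper leaves tacit.
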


Note that by \eqref{f-tends-to-infty} there exists a constant $R_1>0$ such that
\begin{equation*}
f(r)>0\quad\forall r\ge R_1.
\end{equation*}

\begin{lem}\label{f'-to-infty-lem}
Let $n\ge 2$, $\lambda>\frac{1}{n-1}$, $\mu<0$ and $f$ be the unique solution of \eqref{imcf-graph-ode-initial-value-problem}  which satisfies \eqref{f-structure-ineqn}.
Then
\begin{equation}\label{f'-go-to-infty-at-x=infty}
\mbox{$\lim_{r\to\infty}$}f_r(r)=\infty.
\end{equation}
\end{lem}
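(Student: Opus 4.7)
The plan is to argue by contradiction. By Corollary \ref{f-to-infty-cor}, $f_{rr}(r)>0$, so $f_r$ is strictly increasing on $(0,\infty)$; together with $f_r(r)>0$ for $r>0$, the limit
$L := \lim_{r\to\infty} f_r(r)$ exists in $(0,\infty]$. I would assume toward contradiction that $L<\infty$ and push this to an inconsistency coming from the nonlinearity in \eqref{imcf-graph-ode-initial-value-problem}.

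The key auxiliary quantity is $g(r) := r f_r(r) - f(r)$, which is positive by \eqref{f-structure-ineqn} and strictly increasing since $g'(r) = r f_{rr}(r) > 0$. The idea is to pin down $g$ by two incompatible asymptotic rates under the assumption $L<\infty$.

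First I would show $g(r) = o(r)$. Boundedness of $f_r$ together with l'H\^opital's rule gives $f(r)/r \to L$. Since $(f/r)'(r) = g(r)/r^2$, the integral $\int^\infty g(s)/s^{2}\,ds$ converges, and monotonicity of $g$ yields
$$\frac{g(r)}{2r} \;\le\; \int_r^{2r}\frac{g(s)}{s^2}\,ds \;\longrightarrow\; 0,$$
so $g(r)/r \to 0$, i.e.\ $r/g(r) \to \infty$.

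Second, I would multiply \eqref{imcf-graph-ode-initial-value-problem} by $r$ to obtain
$$g'(r) \;=\; r f_{rr}(r) \;=\; \frac{r(1+f_r(r)^2)^2}{\lambda\, g(r)} \;-\; (n-1)(1+f_r(r)^2)\, f_r(r).$$
Under $L<\infty$ the subtracted term is bounded, while $(1+f_r^2)^2\ge 1$, so combined with $r/g(r)\to\infty$ from the first step one gets $g'(r) \ge c\, r/g(r)$ for some $c>0$ and all $r$ large. Then $(g^2)'(r) = 2 g g' \ge 2 c r$, and integration yields $g(r) \ge c' r$ for large $r$, contradicting $g(r) = o(r)$. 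Hence $L=\infty$.

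The main obstacle is the first step, extracting the sub-linear bound $g(r) = o(r)$ from the qualitative assumption $L<\infty$; once that is in hand, the ODE self-inflates $g$ at a linear rate and the contradiction is immediate. The crucial choice is recognising that the quantity $g = rf_r - f$ appearing in the denominator of the nonlinearity is simultaneously controlled from above (via $f/r \to L$) and forced from below (via the ODE), which is precisely what makes $L<\infty$ untenable.
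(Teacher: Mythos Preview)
Your proof is correct and follows the same overall contradiction strategy as the paper, but you track a different quantity in the final step. The paper shows (via l'H\^opital) that if $L<\infty$ then $rf_r/f\to 1$, and then reads off from the ODE that
\[
\frac{rf_{rr}}{(1+f_r^2)f_r}=\frac{1}{\lambda}\cdot\frac{\frac{rf_r}{f}(1+f_r^{-2})}{\frac{rf_r}{f}-1}-(n-1)\longrightarrow\infty,
\]
so eventually $f_{rr}/f_r>1/r$, which integrates to $f_r(r)\ge Cr$, contradicting $L<\infty$. You instead work with $g=rf_r-f$: you show $g/r\to 0$ and then use the ODE to force $g\ge c'r$. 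These are two views of the same mechanism (the denominator $rf_r-f$ becomes small relative to $r$, so the source term dominates). Your route has the minor advantage that $g$ is exactly the denominator in the nonlinearity, so the bootstrap $(g^2)'\ge 2cr$ is very clean; the paper's route has the advantage of contradicting the hypothesis $L<\infty$ directly rather than via an auxiliary quantity. One simplification for your Step~1: since $f_r\to L$ and (by l'H\^opital) $f/r\to L$, you get $g/r=f_r-f/r\to 0$ immediately, without the integral-convergence and monotonicity argument.
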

\begin{proof}
By \eqref{f-structure-ineqn},
\begin{equation}\label{r-f'/f-lower-bd=1}
\frac{rf_r(r)}{f(r)}>1\quad\forall r\ge R_1.
\end{equation}
By \eqref{f''-positive-10},
$a_3:=\lim_{r\to\infty}f_r(r)\in (0,\infty]$
exists. Suppose $a_3<\infty$. Then by \eqref{f-tends-to-infty} and the l'Hospital rule,
\begin{equation}\label{r-f'/f-ratio-go-to1}
\mbox{$\lim_{r\to\infty}$}\frac{rf_r(r)}{f(r)}=\frac{\lim_{r\to\infty}f_r(r)}{\lim_{r\to\infty}\frac{f(r)}{r}}=\frac{\lim_{r\to\infty}f_r(r)}{\lim_{r\to\infty}f_r(r)}=\frac{a_3}{a_3}=1.
\end{equation}
Then by \eqref{imcf-graph-ode-initial-value-problem2}, \eqref{r-f'/f-lower-bd=1} and \eqref{r-f'/f-ratio-go-to1},
\begin{align*}
\mbox{$\lim_{r\to\infty}$}\frac{rf_{rr}}{(1+f_r^2)f_r}
=&\frac{1}{\lambda}\mbox{$\lim_{r\to\infty}$}\frac{r(1+f_r(r)^2)}{(rf_r(r)-f(r))f_r(r)}-(n-1)\notag\\
=&\frac{1}{\lambda}\mbox{$\lim_{r\to\infty}$}\frac{\frac{rf_r(r)}{f(r)}\cdot(1+f_r(r)^{-2})}{\frac{rf_r(r)}{f(r)}-1}-(n-1)\notag\\
=&\infty.
\end{align*} 
Hence there exists $R_2>R_1$ such that 
\begin{equation*}
\frac{rf_{rr}(r)}{(1+f_r(r)^2)f_r(r)}>1\quad\forall r\ge R_2.
\end{equation*}
Thus
\begin{equation*}
\frac{f_{rr}}{f_r}>\frac{1}{r}\quad\forall r\ge R_2.
\end{equation*}
Therefore
\begin{equation*}
f_r(r)\ge\frac{f_r(R_2)}{R_2}r\quad\forall r\ge R_2.
\end{equation*}
Hence
\begin{equation*}
a_3=\mbox{$\lim_{r\to\infty}$}f_r(r)=\infty
\end{equation*}
and contradiction arises. Hence $a_3<\infty$ does not hold. Thus $a_3=\infty$ and the lemma follows.
\end{proof}

\noindent{\bf Proof of Theorem \ref{asymptotic-behaviour-time-infty-thm}}: 
Let 
\begin{equation*}
q(r)=\frac{rf_r(r)}{f(r)}\quad\forall r\ge R_1.
\end{equation*}
By \eqref{imcf-graph-ode-initial-value-problem2} and a direct computation $q$ satisfies
\begin{equation}\label{q-eqn}
q_r(r)=\frac{q(r)}{r}\left\{(1+f_r(r)^2)\left(\frac{q(r)(1+f_r(r)^{-2})}{\lambda(q(r)-1)}-(n-1)\right)+1-q(r)\right\}\quad\forall r>R_1.
\end{equation}
Let $\alpha_0=\frac{\lambda (n-1)}{\lambda (n-1)-1}$, $0<\3<\min (1,\alpha_0-1)$,  $a_{1,\3}=\alpha_0+\3$ and $a_{2,\3}=\alpha_0-\3$. Then
$a_{1,\3}>\alpha_0>a_{2,\3}>1$ and
\begin{equation}\label{a1-epsilon-alpha-0-ineqn}
\frac{a_{1,\3}}{\lambda (a_{1,\3}-1)}<\frac{\alpha_0}{\lambda (\alpha_0-1)}=n-1<\frac{a_{2,\3}}{\lambda (a_{2,\3}-1)}.
\end{equation}
By \eqref{a1-epsilon-alpha-0-ineqn} there exists $M_1>1$ such that
\begin{equation*}
\delta_1:=\left(n-1-\frac{a_{1,\3}(1+M_1^{-2})}{\lambda (a_{1,\3}-1)}\right)(1+M_1^2)-1>0
\end{equation*}
and
\begin{equation*}
\delta_1':=(1+M_1^2)\left(\frac{a_{2,\3}}{\lambda (a_{2,\3}-1)}-(n-1)\right)-\alpha_0>0.
\end{equation*}
By \eqref{f'-go-to-infty-at-x=infty} there exists a costant $R_2>R_1$ such that 
\begin{equation}\label{f'-upper-bd5}
f_r(r)\ge M_1\quad\forall r\ge R_2.
\end{equation}
We will now prove that $q(r)$ is bounded above by $a_{1,\3}$ when $r$ is sufficiently large. Now either
\begin{equation}\label{q-upper-limit-bd}
q(r)\le a_{1,\3}\quad\forall r\ge R_2
\end{equation}
or 
\begin{equation}\label{q-big}
\exists r_1>R_2\quad\mbox{ such that }q(r_1)>a_{1,\3}
\end{equation}
holds. Suppose \eqref{q-big} holds.
Let $R_3=\sup\{r_2>r_1:q(r)>a_{1,\3}\quad\forall r_1\le r<r_2\}$. Suppose $R_3=\infty$. By \eqref{q-eqn} and \eqref{f'-upper-bd5}, $\forall r>r_1$,
\begin{align}\label{q'-ineqn1}
q_r\le&\frac{q(r)}{r}\left\{(1+f_r(r)^2)\left(\frac{a_{1,\3}(1+M_1^{-2})}{\lambda(a_{1,\3}-1)}-(n-1)\right)+1\right\}\notag\\
\le&\frac{q(r)}{r}\left\{-(1+M_1^2)\left(n-1-\frac{a_{1,\3}(1+M_1^{-2})}{\lambda(a_{1,\3}-1)}\right)+1\right\}\notag\\
\le&-\delta_1\frac{q(r)}{r}.
\end{align}
Hence
\begin{equation}\label{q'-ineqn10}
\frac{q_r}{q}\le-\frac{\delta_1}{r}\quad\forall r>r_1.
\end{equation}
Integrating \eqref{q'-ineqn10} over $(r_1,r)$,
\begin{equation*}
q(r)\le q(r_1)(r_1/r)^{\delta_1}\quad\forall r>r_1.
\end{equation*}
Hence
\begin{equation*}
q(r)<\frac{a_{1,\3}}{2}\quad\forall r>\left(\frac{a_{1,\3}}{2q(r_1)}\right)^{-1/\delta_1}r_1
\end{equation*}
which contradicts the assumption that $R_3=\infty$. Hence $R_3<\infty$ and by continuity of $q$, $q(R_3)=a_{1,\3}$. By \eqref{q'-ineqn1}, $q_r(R_3)\le-\delta_1q(R_3)/R_3<0$. Hence there a constant $\delta_2>0$ such that $q(r)<a_{1,\3}$ for all $R_3<r<R_3+\delta_2$. Let $R_4=\sup\{r_4>R_3:q(r)<a_{1,\3}\quad\forall R_3<r<r_4\}$. Suppose $R_4<\infty$. Then $q(R_4)=a_{1,\3}$ and $q_r(R_4)\ge 0$. On the other hand by an argument similar to the proof of \eqref{q'-ineqn1}, $q_r(R_4)\le -\delta_1q(R_4)/R_4<0$ and contradiction arises. Hence $R_4=\infty$. Thus
\begin{equation}\label{q-upper-limit-bd2}
q(r)\le a_{1,\3}\quad\forall r\ge R_3.
\end{equation}
By \eqref{q-upper-limit-bd} and \eqref{q-upper-limit-bd2} there  always exists some constant $R_5(\3)>R_2$ such that
\begin{equation}\label{q-upper-limit-bd3}
q(r)\le a_{1,\3}=\alpha_0+\3\quad\forall r\ge R_5(\3).
\end{equation}
We will now prove that $q(r)$ is bounded below by $a_{2,\3}$ when $r$ is sufficiently large. Now either
\begin{equation}\label{q-lower-limit-bd}
q(r)\ge a_{2,\3}\quad\forall r\ge R_5(\3)
\end{equation}
or 
\begin{equation}\label{q-small}
\exists r_1'>R_5(\3)\quad\mbox{ such that }q(r_1')<a_{2,\3}
\end{equation}
holds. Suppose \eqref{q-small} holds.
Let $R_3'=\sup\{r_2'>r_1':q(r)<a_{2,\3}\quad\forall r_1'<r<r_2'\}$. Suppose $R_3'=\infty$. By  \eqref{q-eqn} and \eqref{q-upper-limit-bd3}, $\forall r>r_1'$,
\begin{equation}\label{q'-ineqn2}
q_r\ge\frac{q(r)}{r}\left\{(1+M_1^2)\left(\frac{a_{2,\3}}{\lambda(a_{2,\3}-1)}-(n-1)\right)-\alpha_0\right\}
\ge\delta_1'\frac{q(r)}{r}.
\end{equation}
Hence
\begin{equation}\label{q'-ineqn11}
\frac{q_r}{q}\ge\frac{\delta_1'}{r}\quad\forall r>r_1'.
\end{equation}
Integrating \eqref{q'-ineqn11} over $(r_1',r)$,
\begin{equation*}
q(r)\ge q(r_1')(r/r_1')^{\delta_1'}\quad\forall r>r_1'.
\end{equation*}
Hence
\begin{equation*}
q(r)>2a_{2,\3}\quad\forall r>\left(\frac{2a_{2,\3}}{q(r_1')}\right)^{1/\delta_1'}r_1'
\end{equation*}
which contradicts the assumption that $R_3'=\infty$. Hence $R_3'<\infty$ and by continuity of $q$, $q(R_3')=a_{2,\3}$. By \eqref{q'-ineqn2}, $q_r(R_3')\ge\delta_1'q(R_3')/R_3'>0$. Hence there a constant $\delta_2'>0$ such that $q(r)>a_{2,\3}$ for all $R_3'<r<R_3'+\delta_2'$. Let $R_4'=\sup\{r_4>R_3':q(r)>a_{2,\3}\quad\forall R_3'<r<r_4\}$. Suppose $R_4'<\infty$. Then $q(R_4')=a_{2,\3}$ and $q_r(R_4')\le 0$. On the other hand by an argument similar to the proof of \eqref{q'-ineqn2}, $q_r(R_4')\ge\delta_1'q(R_4')/R_4'>0$ and contradiction arises. Hence $R_4'=\infty$. Thus
\begin{equation}\label{q-lower-limit-bd2}
q(r)\ge a_{2,\3}\quad\forall r\ge R_3'.
\end{equation}
By \eqref{q-lower-limit-bd} and \eqref{q-lower-limit-bd2} there  always exists some constant $R_5'(\3)>R_5(\3)$ such that
\begin{equation}\label{q-lower-limit-bd3}
q(r)\ge a_{2,\3}=\alpha_0-\3\quad\forall r\ge R_5'(\3).
\end{equation}
Since $\3\in (0,\min (1,\alpha_0-1))$ is arbitrary, by \eqref{q-upper-limit-bd3} and \eqref{q-lower-limit-bd3} we get \eqref{growth-rate}  and Theorem \ref{asymptotic-behaviour-time-infty-thm} follows.

{\hfill$\square$\vspace{6pt}}

\end{document}